
\documentclass[a4paper,10pt]{article}
\usepackage{amsmath,amsthm,amssymb,amscd,layout}   
\usepackage{amssymb,bm,color,enumerate} 
\usepackage[dvips]{graphicx}
\newtheorem{theorem}{Theorem}[section]
\newtheorem{lemma}[theorem]{Lemma}
\newtheorem{proposition}[theorem]{Proposition}

\theoremstyle{definition}
\newtheorem{definition}[theorem]{Definition}
\newtheorem{example}[theorem]{Example}
\theoremstyle{remark}
\newtheorem{remark}[theorem]{Remark}
\theoremstyle{conjecture}

\numberwithin{equation}{section}
\setcounter{page}{1}

\def\R{{\mathbb R}}
\def\N{{\mathbb N}}
\newcommand{\qbinom}[3]
{\left [ \begin{array}{c} #1 \\ #2\end{array}\right]_{#3}}

\topmargin=-45pt \headheight=12truept \headsep=25pt
\oddsidemargin=0cm \evensidemargin=0cm
\textheight=23.7cm \textwidth=16cm

\title{\bf Radial Bargmann representation\\ for 
the Fock space of type B}
\author{Nobuhiro ASAI\footnote{
Department of Mathematics,
Aichi University of Education, 
Hirosawa 1, Igaya, 
Kariya 448-8542, Japan.
Email: nasai[at]auecc.aichi-edu.ac.jp
},
Marek BO\.ZEJKO\footnote{
Institute of Mathematics, 
University of Wroc\l{}aw, 
Pl. Grunwaldzki 2/4, 50-384 Wroc\l{}aw, Poland.
{Email: marek.bozejko[at]math.uni.wroc.pl}
}, 
and 
Takahiro HASEBE\footnote{
Department of Mathematics, 
Hokkaido University, 
Kita 10, Nishi 8, Kita-ku, 
Sapporo 060-0810, Japan. 
{Email: thasebe[at]math.sci.hokudai.ac.jp}
}
}
\date{}
\begin{document}

\maketitle
\begin{abstract}
Let $\nu_{\alpha,q}$ be the probability and orthogonality measure
for the $q$-Meixner-Pollaczek orthogonal polynomials,
which has appeared in \cite{BEH15} as 
the distribution of the $(\alpha,q)$-Gaussian
process (the Gaussian process of type B) 
over the $(\alpha,q)$-Fock space (the Fock space of type B).
The main purpose of this paper is to 
find the radial Bargmann representation of $\nu_{\alpha,q}$.
Our main results cover not only the representation
of $q$-Gaussian distribution by \cite{LM95},
but also of $q^2$-Gaussian and 
symmetric free Meixner distributions on $\mathbb R$.
In addition, non-trivial commutation relations 
satisfied by $(\alpha,q)$-operators are  
presented.

\medskip
\noindent
\textbf{Keywords}: 
Radial Bargmann representation, deformation, Fock spaces,
$q$-orthogonal polynomials.

\smallskip
\noindent
{\bf 2010 Mathematics Subject Classification}: 
33D45, 46L53, 60E99.

\end{abstract}
\section{Introduction}
Bo\.zejko-Ejsmont-Hasebe \cite{BEH15} 
considered a deformation of the (algebraic) full Fock space 
with two parameters $\alpha$ and $q$, namely,
the $(\alpha,q)$-Fock space (or the Fock space of type B)
${\mathcal F}_{\alpha,q}(H)$
over a complex Hilbert space $H$.
The deformation with $\alpha=0$ is equivalent to the
$q$-deformation by Bo\.zejko-Speicher \cite{BS91}
and Bo\.zejko-K\"ummerer-Speicher \cite{BKS97},
and the corresponding $q$-Bargmann-Fock space  
has been constructed by van Leeuwen-Maassen \cite{LM95}.

For the construction of ${\mathcal F}_{\alpha,q}(H)$,
their starting point 
is to replace the Coxeter group of type A,
that is, symmetric group $S_n$ for the $q$-Fock space
by the Coxeter group of type B, 
$\Sigma_n:={\mathbb Z}^n_2\rtimes S_n$
in \eqref{eq:coxeterB} of the Appendix \ref{sec:appendix}.
This replacement provides us 
a more general symmetrization operator on 
$H^{\otimes n}$ than that of \cite{BS91}
to define the $(\alpha,q)$-inner product 
$\langle \cdot,\cdot\rangle_{\alpha,q}$
in \eqref{eq:aq-inner product}.  One can 
define annihilation $B^-_{\alpha,q}(f)$ and 
creation $B^+_{\alpha,q}(f)$ operators acting on 
${\mathcal F}_{\alpha,q}(H)$,
and the $(\alpha,q)$-Gaussian process 
(the Gaussian process of type B) 
$G_{\alpha,q}(f)$ for $f\in H$
as the sum of them,
$G_{\alpha,q}(f):=B^-_{\alpha,q}(f)+B^+_{\alpha,q}(f)$.
It is one of their main interests to find a 
probability distribution $\mu_{\alpha,q,f}$
on $\mathbb R$ of $G_{\alpha,q}(f)$,
$\|f\|_H=1$, with respect to the vacuum state
$\langle \Omega,\cdot~\Omega\rangle_{\alpha,q}$.
${\mathcal F}_{\alpha,q}(H)$ 
equipped with $\langle \cdot,\cdot\rangle_{\alpha,q}$,
$B^-_{\alpha,q}(f)$, and $B^+_{\alpha,q}(f)$
is a typical example of 
interacting Fock spaces in the sense of 
Accardi-Bo\.zejko \cite{AB98}.
It suggests that the theory of orthogonal 
polynomials plays intrinsic roles 
in all previous works mentioned above.
In fact, the measure $\mu_{\alpha,q,f}$ 
given in \cite[Theorem 3.3]{BEH15}
is derived essentially from the orthogonality measure 
$\nu_{\alpha,q}$ associated 
with the $q$-Meixner-Pollaczek orthogonal polynomials
$\{P_n^{(\alpha,q)}(x)\}$ for $\alpha,q\in (-1,1)$ 
given by the recurrence relation,
\begin{equation*}
\begin{cases}
	P_0^{(\alpha,q)}(x)=1, \ P_1^{(\alpha,q)}(x)=x,\\
	xP_n^{(\alpha,q)}(x)=P_{n+1}^{(\alpha,q)}(x)
	+ (1+\alpha q^{n-1})[n]_q
	P_{n-1}^{(\alpha,q)}(x),
	\quad n\geq 1
\end{cases}
\end{equation*}
where 
$[n]_q=1+q+\cdots +q^{n-1}$ is the $q$ number.
However, the Bargmann representation (measure on $\mathbb C$)
of $\nu_{\alpha,q}$ 
has not been obtained yet except the case of $\alpha=0$
for $0\leq q <1$ \cite{LM95}, 
for $q=1$ \cite{Barg61}\cite{AKK03}, for $q=0$ \cite{Bi97},
and $t$-deformed cases of these \cite{AKW16}\cite{KW14},
and for $q>1$ \cite{Kr98}.

Therefore, the main purpose of this paper is to 
find the radial Bargmann representation  
of the probability measure  
$\nu_{\alpha,q}$ on $\mathbb R$.
Our results cover 
the radial Bargmann representations of $q$-Gaussian, 
symmetric free Meixner (Kesten) and $q^2$-Gaussian
distributions on $\mathbb R$.

The organization of this paper will be as follows.
In Section \ref{sec:key}, we shall explain 
how the $(\alpha,q)$-Fock space is related
to the notion of one-mode interacting Fock spaces and 
Bargmann representation.
In Section \ref{sec:aq-Bargmann rep},
the radial Bargmann representation 
of $\nu_{\alpha,q}$ is constructed explicitly
in Theorem \ref{thm main}.
In Section \ref{sec:aq-commutation},
commutation relations 
satisfied by one-mode $(\alpha,q)$-annihilation and 
creation operators 
will be treated.
In the Appendix, we shall give a minimum reference
on the Coxeter group of type B extracted from \cite{BEH15}.

\section{Key ideas and our purpose}\label{sec:key}
Let us point out some of the keys 
to calculate the distribution of 
$G_{\alpha,q}(f)$ in \cite{BEH15}.
It is shown that a linear map,  
$\Phi: \text{Span}\{f^{\otimes n} \ | \ f\in H, \ n\geq 0\}
\to L^2(\mathbb R,\mu_{\alpha,q,f})$
given by 
$\Phi(f^{\otimes n})
=P_n^{(\alpha\langle f,\overline{f}\rangle_H,q)}(x)$,
is an isometry and 
a relation under $\|f\|_H=1$,
\begin{align*}
	G_{\alpha,q}(f)f^{\otimes n}
	& =(B^+_{\alpha,q}(f)+B^-_{\alpha,q}(f))(f^{\otimes n})\\
	&= f^{\otimes (n+1)}
	+(1+\alpha\langle f,\overline{f}\rangle_H)q^{n-1}[n]_q
	f^{\otimes (n-1)},
\end{align*}
is satisfied 
where $\overline{f}$ denotes a self-adjoint involution
of $f\in H$ in \eqref{eq:pi-action}.
This corresponds to the three terms recursion relation 
satisfied by $P_n^{(\alpha\langle f,\overline{f}\rangle_H,q)}(x)$ 
through $\Phi$.
Then, it is proved that   
$\mu_{\alpha,q,f}=\nu_{\alpha\langle f,\overline{f}\rangle_H,q}$
(see $\nu_{\alpha,q}$ in \eqref{eq:qMPmeasure}) in the sense of 
\begin{equation}\label{eq:aq-vacuum}
	\langle
	\Omega, 
	G_{\alpha,q}(f)^n
	\Omega
	\rangle_{\alpha,q}
	=\int x^n\mu_{\alpha,q,f}(dx)
\end{equation}
where $\Omega$ denotes the vacuum vector. 
Therefore, 
in order to get 
the Bargmann representation of $\nu_{\alpha\langle f,\overline{f}\rangle_H,q}$,
it is enough to consider 
that of $\nu_{\alpha,q}$
in the sense of Definition \ref{def:bargmann-def} 
given later.

Since the structure mentioned above can be reduced to 
the one-mode analogue of $(\alpha,q)$-Fock spaces,
let us recall fundamental relationships between 
one-mode interacting Bargmann-Fock spaces and the theory of orthogonal polynomials of one variable.

\begin{definition}\label{def:1-mode}
Let $\{\omega_n\}_{n=0}^\infty$ with $\omega_0:=1$ be an infinite 
sequence of positive real
numbers and $\{\alpha_n \}_{n=0}^\infty$ be of real numbers. 
A one-mode interacting Bargmann-Fock space $\mathcal B$
is defined as 
$\bigoplus_{n=0}^\infty{\mathbb C}\Phi_n$
equipped with $\Phi_n :=z^n/[\omega_n]!, \ [\omega_n]!:=\prod_{k=0}^n\omega_k$, the inner product 
$\langle\Phi_m,\Phi_n\rangle_{\mathcal B}=\delta_{m,n}$
for all $m,n \in \mathbb N\cup \{0\}$,
operators of creation $a^+$, annihilation $a^-$, 
and conservation $a^{\circ}$ defined by
\begin{equation}\label{eq:1-mode-fs}
\begin{cases}
	a^+\Phi_n := \sqrt{\omega_{n+1}}\Phi_{n+1}, & n\geq 0,\\
	a^{-}\Phi_0=0, \ 
	a^{-}\Phi_n := \sqrt{\omega_n}\Phi_{n-1}, &  n\geq 1,\\
	a^{\circ}\Phi_n := \alpha_n\Phi_n, &  n\geq 0.
\end{cases}
\end{equation}
\end{definition}

Let $(\{\omega_n\}_{n=0}^\infty, \{\alpha_n \}_{n=0}^\infty)$
be a pair of sequences in Definition \ref{def:1-mode}
and define a sequence of monic polynomials
$\{P_n(x)\}$ recurrently by 
\begin{equation}
\begin{cases}
	P_0(x) = 1, \ P_1(x)=x-\alpha_0,\\
	xP_n(x)= P_{n+1}(x)+\omega_nP_{n-1} + \alpha_{n}P_n(x),
	\quad n\geq 1. \label{eq:3terms}
\end{cases}
\end{equation} 
Then there exists a probability measure $\mu$ 
on ${\mathbb R}$ with finite moments of all orders such that 
$\{P_n(x)\}$ is the orthogonal polynomials with 
$\langle P_m(x),P_n(x)\rangle_{L^2(\mathbb R,\mu)}
= \delta_{m,n} [\omega_n]!$ for all 
$m,n \in \mathbb N\cup \{0\}$. 
(See \cite{Chi78}\cite{HO07}, for example.)

It is easy to see that a linear map 
\begin{equation*}
	U : {\mathcal B}=\bigoplus_{n=0}^\infty{\mathbb C}\Phi_n
	\to L^2({\mathbb R},\mu)
\end{equation*}
defined by 
$U\left(\sqrt{[\omega_n]!}\Phi_n\right)=P_n(x)$
is an isometry and in addition
$a^+ +a^- +a^{\circ}=U^*XU$ is satisfied 
due to \eqref{eq:1-mode-fs} and \eqref{eq:3terms},
where $X$ represents the multiplication operator 
by $x$ in $L^2(\mathbb R,\mu)$.
This intertwining relation provides a notion of 
the quantum decomposition of a 
classical random variable $X$ and  
\begin{equation}\label{eq:1-mode vacuum}
	\langle
	\Phi_0, (a^+ +a^- +a^{\circ})^n\Phi_0
	\rangle_{\mathcal B}
	=\int x^n\mu(dx).
\end{equation}
Therefore,  
if $\omega_n=(1+\alpha q^{n-1})[n]_q, \ \alpha_n=0$,
the equality in \eqref{eq:1-mode vacuum}
is a one-mode analogue of \eqref{eq:aq-vacuum}.

Now it is interesting to consider the following
moment problem to realize the inner product by 
the integral:

\medskip
\noindent
{\bf Problem 1.}
For a given $\{\omega_n\}$ of $\mu$, 
find a probability measure $\gamma_{\mu}$
satisfying the equality,
\begin{equation}\label{eq:bargmann-def}
	\int_{\mathbb C}\overline{z}^mz^n\gamma_{\mu}(d^2z)
	=\delta_{m,n}[\omega_n]!
\end{equation}
for all $m,n \in \mathbb N\cup \{0\}$.

\begin{definition}\label{def:bargmann-def}
A measure $\gamma_{\mu}$ satisfying 
the equality \eqref{eq:bargmann-def}
is called a Bargmann representation (measure on $\mathbb C$)
of a measure $\mu$ on $\mathbb R$.
\end{definition}

It was proved 
in \cite{Sz07} (see also \cite{AKW16}\cite{KW14}) 
that if a measure 
$\mu$ admits any Bargmann representation, then it also admits a 
radial (rotation invariant) Bargmann representation
\begin{equation*}
	\gamma_\mu(d^2z)=\frac{1}{2\pi}
	\lambda_{[0,2\pi)}(d\theta)\rho_\mu(dr),
	\ z=re^{i\theta}, \ r\geq0, \ \theta\in[0,2\pi), 
\end{equation*}
where $\lambda_{[0,2\pi)}$ is the Lebesgue measure on $[0,2\pi)$. 
It says that the angular part takes care of orthogonality of
\eqref{eq:bargmann-def}.  Therefore, 
Problem 1 can be transformed into
the following Problem 2:

\medskip
\noindent
{\bf Problem 2.}
Find a positive radial measure $\rho_\mu$ satisfying 
\begin{equation*}
	\int_0^\infty r^{2n}\rho_\mu(dr)
	=[\omega_n]!
\end{equation*}
for all $m,n \in \mathbb N\cup \{0\}$. 

\medskip
\noindent
{\bf Main Purpose:}
We shall consider Problem 2
associated with 
$\omega_n=(1+\alpha q^{n-1})[n]_q, \ \alpha_n=0$
of $\nu_{\alpha.q}$ in Section \ref{sec:aq-Bargmann rep}.
Furthermore, commutation relations 
satisfied by $a^+,a^-$ acting on $\mathcal B$
associated with $\omega_n=(1+\alpha q^{n-1})[n]_q$
will be presented in Section \ref{sec:aq-commutation}.

\begin{remark}
\begin{enumerate}[\rm(1)]
\item
One can notice that $\gamma_{\mu}$ and $\rho_\mu$ are
determined only 
by $[\omega_n]!$.  Therefore, 
it is enough in general 
for the Bargmann representation in the above sense 
to consider the symmetric measure $\mu$ with $\alpha_n=0$
for all $n$, which implies that $a^\circ$ is a zero operator.
\item
If $\mu$ is symmetric, then $\alpha_n=0$
for all $n$ is implied.  The converse statement
is true if $\mu$ is determined by its moments. 
\end{enumerate}
\end{remark}
\section{$(\alpha,q)$-Bargmann representation}
\label{sec:aq-Bargmann rep}
\subsection{$q$-Meixner-Pollaczek polynomials}
Let us recall standard notations from $q$-calculus,
which can be found in \cite{GR04}\cite{KLS10} for example.
The $q$-shifted factorials are defined by 
\begin{equation*}
	(a;q)_0 :=1, \quad
	(a;q)_k:=\prod_{\ell=1}^{k}(1-aq^{\ell -1}),
	\ k=1,2,\dots,\infty,
\end{equation*}
and the product of $q$-shifted factorials is defined by
\begin{equation*}	
	(a_1,a_2;q)_k := (a_1;q)_k(a_2;q)_k,
	\quad
	\ k=1,2,\dots,\infty.
\end{equation*}

\begin{remark}\label{rem:Pochhammer}
The $q$-shifted factorials are a natural extension
of the Pochhammer symbol $(\cdot)_n$ because one can see
that $\lim_{q\to 1}[k]_q=k$ implies
\begin{equation}\label{eq:Pochhammer}
	\lim_{q\to 1}\frac{(q^k;q)_n}{(1-q)^n}=(k)_n,
\end{equation}
where $(k)_0:=1, \ (k)_n:=k(k+1)\cdots(k+n-1), \ n\geq 1$. 
\end{remark}

As we have mentioned,  
$\{P_n^{(\alpha,q)}(x)\}$ for $\alpha,q\in (-1,1)$ 
is the $q$-Meixner-Pollaczek polynomials
satisfying the recurrence relation,
\begin{equation}\label{eq:P_n^q}
\begin{cases}
	P_0^{(\alpha,q)}(x)=1, \ P_1^{(\alpha,q)}(x)=x,\\
	xP_n^{(\alpha,q)}(x)=P_{n+1}^{(\alpha,q)}(x)
	+ (1+\alpha q^{n-1})[n]_q
	P_{n-1}^{(\alpha,q)}(x),
	\quad n\geq 1.
\end{cases}
\end{equation}
It is known in \cite[14.9.2]{KLS10} 
and \cite[page 1781]{BEH15}
that 
the orthogonality measure $\nu_{\alpha.q}$ for such polynomials 
has the density of the form,
\begin{equation}\label{eq:qMPmeasure}
	\frac{(q,\gamma^2;q)_\infty}{2\pi}
	\sqrt{\frac{1-q}{4-(1-q)x^2}}
	\left(
	\frac{g(x,1;q)g(x,-1;q)g(x,\sqrt{q};q)g(x,-\sqrt{q};q)}
	{g(x,i\gamma;q)g(x,-i\gamma;q)}
	\right),
\end{equation}
supported on the interval $(-2/\sqrt{1-q},2/\sqrt{1-q})$
where 
\begin{equation*}
	g(x,b;q)=\prod_{k=0}^\infty
	(1-4bx(1-q)^{-1/2}q^k+b^2q^{2k}),
\end{equation*}
and 
\begin{equation*}
	\gamma=
\begin{cases}
	\sqrt{-\alpha}, & \alpha< 0,\\
	i\sqrt{\alpha}, & \alpha\geq 0.
\end{cases}
\end{equation*}

\begin{example}
\begin{enumerate}[(1)]
\item
If $\alpha=0$, then $q$-Meixner-Pollaczek
polynomials get back to the $q$-Hermite polynomials $H_n^{(q)}(x)$ 
whose orthogonality measure is the standard $q$-Gaussian measure
on $(-2/\sqrt{1-q},2/\sqrt{1-q})$
given by
\begin{align*}
	\nu_q(dx)
	& := \frac{\sqrt{1-q}}{\pi}
	\sin\theta 
	\prod_{n=1}^{\infty}(1-q^n)|1-q^ne^{2i\theta}|^2 dx,
\end{align*}
where $x\sqrt{1-q}=2\cos\theta, \ \theta\in [0,\pi]$.
Furthermore, 
one can get the standard Gaussian law as $q\to 1$, 
 the Bernoulli law as $q\to -1$,
and the standard Wigner's semi-circle law if $q=0$. 
See \cite{BKS97}\cite{BS91}.

\item 
The measure $\nu_{\alpha,0}$ is the 
symmetric free Meixner law
\cite{An03}\cite{BB06}\cite{SY01}. 

\item The measure $\nu_{q,q}$ is the $q^2$-Gaussian 
law scaled by $\sqrt{1+q}$.

\item 
If $\alpha=-q^{2\beta}$
as suggested in Remark \ref{rem:Pochhammer}, then
the measure $\nu_{-q^{2\beta},q}$
under a certain scaling  
converges to the classical symmetric Meixner law
as $q\uparrow 1$, 
\begin{equation}\label{eq:symmetric-meixner}
	\frac{2^{2\beta}}{2\pi\varGamma(2\beta)}
	|\varGamma(\beta+ix)|^2 dx, 
	\quad x\in {\mathbb R}.
\end{equation}
See also \cite[14.9.15]{KLS10}.
\end{enumerate}
\end{example}

%
\subsection{Problem}
For $\alpha,q \in(-1,1)$, we would like to know when 
there exists a radial measure $\rho_{\nu_{\alpha,q}}$ 
satisfying   
\begin{equation}\label{prob:aq-radial}
	\int_0^\infty r^{2 k} \,\rho_{\nu_{\alpha,q}} (d r) 
	=(-\alpha;q)_k[k]_q!,
	\qquad k \in {\mathbb N} \cup\{0\}.
\end{equation}
Here $[k]_q!$ denotes the $q$-factorials defined by 
\begin{equation*}
	[0]_q!:=1, 
	\quad [k]_q!:=\prod_{\ell =1}^k[\ell]_q=\frac{(q;q)_k}{(1-q)^k},
	\ k\geq 1.
\end{equation*}
It is easy to get the inequality for $\alpha, q\in (-1,1)$, 
\begin{equation}\label{eq:aq-ineq}
	\left|(-\alpha;q)_k[k]_q!\right|
	\leq \left(\frac{4}{1-|q|}\right)^k, 
	\ k\in {\mathbb N}\cup \{0\}.
\end{equation}	
Due to Carleman criterion for the moment problem, 
this inequality implies that 
a radial measure $\rho_{\nu_{\alpha,q}}$ is determined 
uniquely by the sequence 
$\left\{(-\alpha;q)_k[k]_q!\right\}$.

We shall 
follow the procedure below to construct $\rho_{\nu_{\alpha,q}}$  
in \eqref{prob:aq-radial}. 
\begin{enumerate}[\rm(1)]
\item
Recall the radial part of the $q$-Gaussian measure
on $\mathbb C$ ($q$-Bargmann measure), $\rho_{\nu_q} = \rho_{\nu_{0,q}}$, obtained in \cite{LM95},
\begin{equation}\label{eq:radial-qgauss}
	\int_0^\infty r^{2 k} \, \rho_{\nu_q}(dr) 
	= [k]_q!.
\end{equation}
\item
Find a radial (possibly signed) measure $\rho_{\alpha,q}$
having the moment $(-\alpha;q)_k$.
\item
Compute the multiplicative (Mellin) convolution $\rho_{\nu_q}\circledast \rho_{\alpha,q}$ to 
get $\rho_{\nu_{\alpha,q}}$.
\end{enumerate}

\begin{remark}
It is known \cite{LM95} that a radial measure $\rho_{\nu_q}$ 
in \eqref{eq:radial-qgauss}
does not exist for $q<0$.
However, one can see that the positivity assumption 
on $q$ can be relaxed for $\rho_{\nu_{\alpha,q}}$ if $\alpha=q$.
It will be discussed right after the proof of 
Proposition \ref{bargmann} and in Proposition \ref{thm a=q}.
\end{remark}

\subsection{Construction of $(\alpha,q)$-radial measures}
\begin{lemma}\label{lem bargmann} 
Suppose that $\alpha\in(-1,1)$ and $q\in [0,1)$. Let 
\begin{equation*}
	\rho_{\alpha,q}
	:= (-\alpha;q)_\infty
	\sum_{n=0}^\infty \frac{(-\alpha)^n}{(q;q)_n} 
	\delta_{q^{n/2}}, 
\end{equation*}
which is a signed measure. Then we have 
\begin{equation*}
	\int_0^\infty r^{2 k} \, \rho_{\alpha,q}(dr) 
	= (-\alpha;q)_k,\qquad k \in {\mathbb N}\cup\{0\}. 
\end{equation*}
In particular, if taking $\alpha=-q$, then one can see 
$\rho_{\nu_q}=D_{(1-q)^{-1/2}}(\rho_{-q,q})$, namely, 
\begin{equation*}
	\int_0^\infty r^{2 k} \, D_{(1-q)^{-1/2}}(\rho_{-q,q}) (dr) 
	= \frac{(q;q)_k}{(1-q)^k} = [k]_q!,
\end{equation*}
where $D_t(\lambda)$ is the push-forward of $\lambda$ 
by the map $x\mapsto t x$ for a measure $\lambda$ on $\mathbb R$.
\end{lemma}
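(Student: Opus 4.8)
The plan is to evaluate the integral directly, since $\rho_{\alpha,q}$ is purely atomic, and then recognize the resulting $q$-series as a value of the $q$-exponential function. For each $k\in\mathbb N\cup\{0\}$ one has
\begin{equation*}
	\int_0^\infty r^{2k}\,\rho_{\alpha,q}(dr)
	=(-\alpha;q)_\infty\sum_{n=0}^\infty\frac{(-\alpha)^n}{(q;q)_n}\,(q^{n/2})^{2k}
	=(-\alpha;q)_\infty\sum_{n=0}^\infty\frac{(-\alpha q^k)^n}{(q;q)_n}.
\end{equation*}
Since $\alpha\in(-1,1)$ and $q\in[0,1)$, we have $|{-\alpha q^k}|=|\alpha|q^k<1$, so this series converges absolutely; in particular $\rho_{\alpha,q}$ is a well-defined finite signed measure and the interchange above is justified.

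Next I would invoke Euler's $q$-binomial theorem (the $q$-exponential identity), $\sum_{n\ge0}z^n/(q;q)_n=1/(z;q)_\infty$ for $|z|<1$ (see e.g.\ \cite{GR04}), applied with $z=-\alpha q^k$. This gives
\begin{equation*}
	\int_0^\infty r^{2k}\,\rho_{\alpha,q}(dr)=\frac{(-\alpha;q)_\infty}{(-\alpha q^k;q)_\infty}.
\end{equation*}
The factorization $(-\alpha;q)_\infty=(-\alpha;q)_k\,(-\alpha q^k;q)_\infty$, immediate from the definition of the $q$-shifted factorial, then yields $\int_0^\infty r^{2k}\,\rho_{\alpha,q}(dr)=(-\alpha;q)_k$, which is the first assertion.

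For the last statement I would specialize to $\alpha=-q$, so that $-\alpha=q>0$ and every atom weight $(q;q)_\infty q^n/(q;q)_n$ is strictly positive; thus $\rho_{-q,q}$ is a genuine (in fact probability) measure, and by the part already proved its $2k$-th moment equals $(q;q)_k$. Pushing forward by $x\mapsto(1-q)^{-1/2}x$ multiplies the $2k$-th moment by $(1-q)^{-k}$, so $D_{(1-q)^{-1/2}}(\rho_{-q,q})$ has $2k$-th moment $(q;q)_k/(1-q)^k=[k]_q!$, matching the moment sequence \eqref{eq:radial-qgauss} of $\rho_{\nu_q}$. Since the bound \eqref{eq:aq-ineq} with $\alpha=0$ together with the Carleman criterion forces uniqueness of a measure with these moments, we conclude $\rho_{\nu_q}=D_{(1-q)^{-1/2}}(\rho_{-q,q})$.

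There is no serious obstacle here: the computation is essentially a one-line application of the $q$-exponential identity, and the only points needing a little care are (i) quoting the right radius of convergence for that identity so that the rearrangement of the series and the telescoping of $q$-factorials are legitimate, and (ii) invoking moment-determinacy (via \eqref{eq:aq-ineq} and Carleman) to upgrade equality of moment sequences to equality of measures in the case $\alpha=-q$.
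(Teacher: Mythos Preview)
Your proof is correct and follows essentially the same route as the paper: compute the moment integral as a series, apply Euler's identity $\sum_{n\ge0}z^n/(q;q)_n=1/(z;q)_\infty$ at $z=-\alpha q^k$, and simplify via $(-\alpha;q)_\infty=(-\alpha;q)_k(-\alpha q^k;q)_\infty$. You are in fact a bit more careful than the paper, which does not spell out the convergence check or the moment-determinacy step for the identification $\rho_{\nu_q}=D_{(1-q)^{-1/2}}(\rho_{-q,q})$; your appeal to \eqref{eq:aq-ineq} and Carleman is exactly the right way to close that gap.
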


\begin{proof} 
Firstly, we have  
\begin{equation*}\label{eq A}
\int_0^\infty r^{2 k} \, \rho_{\alpha,q}(dr) = (-\alpha;q)_\infty\sum_{n=0}^\infty \frac{(-\alpha q^k)^n}{(q;q)_n}. 
\end{equation*}
Since  Euler's formula (see \cite[1.3.15]{GR04}),
\begin{equation}\label{eq B}
	\frac{1}{(a;q)_\infty} =\sum_{n=0}^\infty \frac{a^n}{(q;q)_n},
\end{equation}
is known, we replace $a$ by $-\alpha q^k$ 
in \eqref{eq B} to obtain 
\begin{align*} 
	\int_0^\infty r^{2 k} \, \rho_{\alpha,q}(dr) 
	& =\frac{(-\alpha;q)_\infty}{(-\alpha q^k;q)_\infty}\\
	& = (-\alpha;q)_k. 
\end{align*}
The proof is complete.
\end{proof}

\begin{remark}
\begin{enumerate}[\rm(1)]
\item 
The last equality in proof is due to the formula  
\begin{equation*}
	(a;q)_k=\frac{(a;q)_\infty}{(aq^k;q)_\infty}.
\end{equation*}
See \cite[1.2.30]{GR04}, for example. 
\item
Euler's formula is considered as 
the $q$-analogue of exponential function $e^a$ due to 
\begin{equation*}
	\lim_{q\to 1}\frac{1}{((1-q)a;q)_n}=e^a.
\end{equation*}
\end{enumerate}
\end{remark}

Let 
\begin{equation*}
	\qbinom{n}{\ell}{q} 
	:=\frac{[n]_q!}{[\ell]_q![n-\ell]_q!}
	=\frac{(q;q)_n}{(q;q)_\ell(q;q)_{n-\ell}} 
\end{equation*}
be the $q$-binomial coefficients and 
$h_n(z\mid q)$ be the Rogers-Szeg\"o polynomials 
\cite{GR04}\cite{S05}
defined by 
\begin{equation*}
	h_n(z\mid q)
	= \sum_{\ell=0}^n \qbinom{n}{\ell}{q} z^\ell.
\end{equation*}

\begin{proposition}\label{bargmann} 
Suppose that $\alpha \in(-1,1)$ and $q\in[0,1)$. Let 
\begin{equation}\label{bargmann2}
	\rho_{\nu_{\alpha,q}}
	:=
\begin{cases} \displaystyle
(-\alpha,q;q)_\infty 
\sum_{n=0}^\infty 	
\frac{q^n}{(q;q)_n} h_n(-\alpha q^{-1}\mid q) 
\delta_{(1-q)^{-1/2}q^{n/2}}, & q>0, \\
 -\alpha \delta_0+(1+\alpha)\delta_1, & q=0,  
\end{cases}
\end{equation}
which is a signed measure in general. Then we have
\begin{equation}\label{eq moments}
\int_0^\infty r^{2 k} \, \rho_{\nu_{\alpha,q}}(d r) 
= \frac{(-\alpha,q;q)_k}{(1-q)^k} 
=(-\alpha;q)_k[k]_q!, \qquad k \in \N\cup\{0\}. 
\end{equation}
\end{proposition}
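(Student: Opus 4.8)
The plan is to realize $\rho_{\nu_{\alpha,q}}$ as the multiplicative (Mellin) convolution of the known $q$-Bargmann measure $\rho_{\nu_q}$ from \eqref{eq:radial-qgauss} with the signed measure $\rho_{\alpha,q}$ from Lemma \ref{lem bargmann}, following the three-step procedure announced before the lemma. The decisive structural fact is that the $(2k)$-th moment of a Mellin convolution factorizes: if $\mu\circledast\lambda$ denotes the push-forward of the product measure under $(r,s)\mapsto rs$, then $\int_0^\infty r^{2k}\,(\mu\circledast\lambda)(dr) = \left(\int_0^\infty r^{2k}\,\mu(dr)\right)\left(\int_0^\infty s^{2k}\,\lambda(ds)\right)$. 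Granting this, the right-hand side of \eqref{eq moments} is immediate from \eqref{eq:radial-qgauss} and Lemma \ref{lem bargmann}: the $q$-Gaussian radial moment contributes $[k]_q!$, the $\rho_{\alpha,q}$ moment contributes $(-\alpha;q)_k$, and the product is exactly $(-\alpha;q)_k[k]_q!=\frac{(-\alpha,q;q)_k}{(1-q)^k}$, using $[k]_q!=\frac{(q;q)_k}{(1-q)^k}$.

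The main work is therefore to compute the convolution explicitly and check it equals the atomic measure displayed in \eqref{bargmann2}. For $q>0$, the $q$-Bargmann measure $\rho_{\nu_q}$ of \cite{LM95} is itself purely atomic, supported on a geometric sequence of radii $\{(1-q)^{-1/2}q^{m/2}\}_{m\ge 0}$ with explicitly known weights, and $\rho_{\alpha,q}$ is supported on $\{q^{n/2}\}_{n\ge 0}$ with weights $(-\alpha;q)_\infty\frac{(-\alpha)^n}{(q;q)_n}$. The product map sends the pair of atoms at $q^{m/2}(1-q)^{-1/2}$ and $q^{n/2}$ to a single atom at $(1-q)^{-1/2}q^{(m+n)/2}$, so the convolution is supported on $\{(1-q)^{-1/2}q^{j/2}\}_{j\ge 0}$ with the weight at index $j$ being the sum over $m+n=j$ of the products of weights. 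The plan is to carry out this Cauchy-product summation and recognize the inner sum, after pulling out $\frac{q^j}{(q;q)_j}$ and the prefactors, as the Rogers–Szegő polynomial $h_j(-\alpha q^{-1}\mid q)=\sum_{\ell=0}^j\qbinom{j}{\ell}{q}(-\alpha q^{-1})^\ell$; this is where the combinatorial identity $\qbinom{j}{\ell}{q}=\frac{(q;q)_j}{(q;q)_\ell(q;q)_{j-\ell}}$ does the bookkeeping. The $q=0$ case is a degenerate direct check: $\rho_{\nu_0}=\delta_1$ (semicircle Bargmann measure) and $\rho_{\alpha,0}=(1+\alpha)\delta_1-\alpha\delta_0$ give moments $1,1+\alpha,1+\alpha,\dots$, matching $(-\alpha;0)_k[k]_0!$.

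I expect the main obstacle to be twofold. First, one must recall or re-derive the precise form of the $q$-Bargmann measure $\rho_{\nu_q}$ from \cite{LM95} — it is only referenced here through its moments \eqref{eq:radial-qgauss}, so the explicit atoms and weights need to be reinstated, and the convolution computation depends on having them in a usable closed form (they involve $q$-exponential/theta-type normalizations). Second, the Cauchy-product rearrangement leading to the Rogers–Szegő polynomial requires care with convergence and with the index shift $-\alpha q^{-1}$ versus $-\alpha$; since $\ell$ runs only up to $j$ the negative power $q^{-1}$ is harmless, but one should confirm the prefactors $(-\alpha,q;q)_\infty$ absorb all the normalization constants correctly. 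An alternative, cleaner route that sidesteps reconstructing $\rho_{\nu_q}$ explicitly: define the candidate measure in \eqref{bargmann2} directly and verify \eqref{eq moments} by a single generating-function computation — substitute $r^{2k}=(1-q)^{-k}q^{nk}$, interchange the (absolutely convergent, by \eqref{eq:aq-ineq}-type bounds) double sum over $n$ and $\ell$, and collapse it using Euler's formula \eqref{eq B} twice, once in $n$ to produce a $q$-shifted factorial and once to identify $(-\alpha,q;q)_k$. Either way the proof is short; I would present the direct verification as the main argument and remark that it equals the announced $\rho_{\nu_q}\circledast\rho_{\alpha,q}$, with the moment factorization as the conceptual explanation. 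After the proof I would note, as flagged in the earlier Remark, that for $\alpha=-q$ the normalization simplifies and positivity can be recovered even for some $q<0$, pointing to Proposition \ref{thm a=q}.
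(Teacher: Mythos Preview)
Your approach is correct and essentially identical to the paper's: form the Mellin convolution $\rho_{\alpha,q}\circledast\rho_{\nu_q}$, collect atoms by the Cauchy product, recognize the inner sum as $h_n(-\alpha q^{-1}\mid q)$, and obtain \eqref{eq moments} from the multiplicativity of moments together with Lemma~\ref{lem bargmann}. Your ``main obstacle'' dissolves once you notice that Lemma~\ref{lem bargmann} itself (taking $\alpha=-q$) already supplies the explicit atomic form $\rho_{\nu_q}=D_{(1-q)^{-1/2}}(\rho_{-q,q})=(q;q)_\infty\sum_{m\ge0}\frac{q^m}{(q;q)_m}\delta_{(1-q)^{-1/2}q^{m/2}}$, so no separate appeal to \cite{LM95} is required and the prefactor $(-\alpha,q;q)_\infty$ drops out immediately.
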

\begin{proof} First of all, it is easy to show \eqref{eq moments}
for the case $q=0$.  Therefore, let us  assume $q>0$. 

One can compute   
the multiplicative (Mellin) convolution $\circledast$ 
to get $\rho_{\nu_{\alpha,q}}$ as follows:
\begin{align*}
	\rho_{\nu_{\alpha,q}}
	& = \rho_{\alpha,q} \circledast D_{(1-q)^{-1/2}}(\rho_{-q,q})\\
	& = (-\alpha,q;q)_\infty \sum_{n=0}^\infty 
	\left
	(\sum_{\ell=0}^n 
	\frac{(-\alpha)^{\ell} q^{n-\ell}}{(q;q)_{\ell} (q;q)_{n-\ell}
	}\right)
	\delta_{(1-q)^{-1/2}q^{n/2}}\\
	& =(-\alpha,q;q)_\infty 
	\sum_{n=0}^\infty 	
	\frac{q^n}{(q;q)_n} 
	h_n(-\alpha q^{-1}\mid q)
	\delta_{(1-q)^{-1/2}q^{n/2}}.
\end{align*}
On the other hand, by Lemma \ref{lem bargmann}, we have
\begin{equation*}
\int_0^\infty r^{2 k} \, D_{(1-q)^{-1/2}}(\rho_{-q,q}) (d r) =  \frac{(q;q)_k}{(1-q)^k} = [k]_q!. 
\end{equation*}
Therefore, one can get
\begin{equation*}
\begin{split}
\int_0^\infty r^{2 k} \,\rho_{\nu_{\alpha,q}} (d r) 
&=\int_0^\infty r^{2 k} \, \rho_{\alpha,q}(d r) \int_0^\infty r^{2 k} \, D_{(1-q)^{-1/2}}(\rho_{-q,q}) (d r) \\
&=  (-\alpha;q)_k [k]_q!,\qquad k \in\N \cup\{0\}. 
\end{split}
\end{equation*}
\end{proof}

In Proposition \ref{bargmann},
we have obtained $\rho_{\nu_{\alpha,q}}$
for $\alpha \in(-1,1)$ and $q\in(0,1)$.
Due to the term
$$
	\delta_{(1-q)^{-1/2}q^{n/2}}
	\ \  \text{in} 
	\ \  \rho_{\nu_{\alpha,q}},
$$ 
it seems impossible for $q\in(-1,0)$ to define $\rho_{\nu_{\alpha,q}}$. 
However, if $-1<\alpha=q<0$ then  $\nu_{q,q}$ coincides with a scaled $q^2$-Gaussian measure, and hence the Bargmann measure exists.


\begin{proposition}\label{thm a=q}
Suppose $-1< \alpha=q<0$. We define 
\begin{equation}\label{bargmann4}
\begin{split}
\rho_{\nu_{q,q}} 
&:=D_{(1+q)^{1/2}}(\rho_{\nu_{q^2}})  \\
&=  (q^2;q^2)_\infty 
	\sum_{n=0}^\infty 
	\frac{q^{2n}}{(q^2;q^2)_n} 
	\delta_{(1-q)^{-1/2}(-q)^{n}}.
\end{split}	
\end{equation}
Then one can see 
\begin{equation*}
	\int_0^\infty r^{2 k} \, \rho_{\nu_{q,q}}(dr)  = (1+q)^k [k]_{q^2}! 
	 = (-q;q)_k [k]_q!. 
\end{equation*}
\end{proposition}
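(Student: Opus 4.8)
The plan is to reduce everything to the known $q$-Gaussian (more precisely, $q^2$-Gaussian) radial measure rather than to recompute a multiplicative convolution from scratch. The key observation is the algebraic identity $(-q;q)_k[k]_q! = (1+q)^k[k]_{q^2}!$, which I would verify first: since $[2j]_q = [j]_{q^2}(1+q^j)$ and, after pairing consecutive factors of $[k]_q!$, one extracts powers of $(1+q)$ together with the $q^2$-factorials and the $q$-shifted-factorial term $(-q;q)_k = \prod_{j=1}^k (1+q^j)$. This is the ``main obstacle'' only in the bookkeeping sense; it is a routine $q$-calculus manipulation and I would present it in one or two lines. Granting it, the target moment sequence $(-q;q)_k[k]_q!$ equals $(1+q)^k [k]_{q^2}!$, which is precisely $(1+q)^k$ times the $q^2$-moment sequence of the $q^2$-Gaussian radial measure.

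Next I would invoke \eqref{eq:radial-qgauss} with $q$ replaced by $q^2$ (legitimate since $q^2 \in [0,1)$ even though $q<0$), giving a radial measure $\rho_{\nu_{q^2}}$ with $\int_0^\infty r^{2k}\,\rho_{\nu_{q^2}}(dr) = [k]_{q^2}!$. Explicitly, from \cite{LM95} one has
\begin{equation*}
	\rho_{\nu_{q^2}} = (q^2;q^2)_\infty \sum_{n=0}^\infty \frac{q^{2n}}{(q^2;q^2)_n}\,\delta_{(1-q^2)^{-1/2} q^{n}},
\end{equation*}
so that the atoms sit at $r = (1-q^2)^{-1/2}q^n$. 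Applying the dilation $D_{(1+q)^{1/2}}$ multiplies each atom location by $(1+q)^{1/2}$ and hence each $r^{2k}$-integral by $(1+q)^k$; the atom position becomes $(1+q)^{1/2}(1-q^2)^{-1/2} q^n = (1-q)^{-1/2} q^n = (1-q)^{-1/2}(-q)^n \cdot (\text{sign})$. Since the Bargmann/radial framework only sees $r\ge 0$, and $q^n$ versus $(-q)^n$ differ only by a sign which is irrelevant for a measure on $[0,\infty)$, this matches the stated formula \eqref{bargmann4}: $\rho_{\nu_{q,q}} = D_{(1+q)^{1/2}}(\rho_{\nu_{q^2}}) = (q^2;q^2)_\infty \sum_n \frac{q^{2n}}{(q^2;q^2)_n}\delta_{(1-q)^{-1/2}(-q)^n}$.

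Finally I would assemble the moment computation: for each $k$,
\begin{equation*}
	\int_0^\infty r^{2k}\,\rho_{\nu_{q,q}}(dr) = \int_0^\infty \bigl((1+q)^{1/2}r\bigr)^{2k}\,\rho_{\nu_{q^2}}(dr) = (1+q)^k \int_0^\infty r^{2k}\,\rho_{\nu_{q^2}}(dr) = (1+q)^k [k]_{q^2}!,
\end{equation*}
and then rewrite the right-hand side as $(-q;q)_k[k]_q!$ using the identity established at the outset. The one point deserving a sentence of care is that $D_{(1+q)^{1/2}}$ is a genuine dilation by a positive scalar precisely because $-1<q<0$ forces $1+q>0$; this is exactly the place where the hypothesis $\alpha=q\in(-1,0)$ is used, and it is why the construction survives here even though, as remarked after Proposition~\ref{bargmann}, $\rho_{\nu_q}$ itself fails to exist for negative $q$. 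I expect no serious obstacle; the proof is essentially a transport of the $q^2$-Gaussian result through a scaling, with the only content being the elementary $q$-factorial identity.
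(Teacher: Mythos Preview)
Your proposal is correct and follows essentially the same approach as the paper: first verify the identity $(-q;q)_k[k]_q! = (1+q)^k[k]_{q^2}!$ by a direct product manipulation, then realize the desired measure as the dilation $D_{(1+q)^{1/2}}$ of the known $q^2$-Gaussian radial measure $\rho_{\nu_{q^2}}$ (which exists since $q^2\in[0,1)$). One small cosmetic point: in your explicit formula for $\rho_{\nu_{q^2}}$ the atom location should be $(1-q^2)^{-1/2}(q^2)^{n/2}=(1-q^2)^{-1/2}|q|^n$, not $(1-q^2)^{-1/2}q^n$; you correctly observe that only $|q|^n=(-q)^n$ matters on $[0,\infty)$, so the conclusion is unaffected.
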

\begin{proof} One can see by direct computations  
\begin{align*}
	(-q;q)_k [k]_q! 
	&= \left\{\prod_{\ell=1}^k(1-(-q)q^{\ell-1})\right\}
	\left\{\prod_{\ell=1}^k\frac{1-q^\ell}{1-q}\right\}\\
	& = (1+q)^k\prod_{\ell=1}^k\frac{1-q^{2\ell}}{1-q^2}\\
	& = (1+q)^k [k]_{q^2}!.
\end{align*}
Thus $\rho_{\nu_{q,q}}$ can be defined as 
the radial measure for $q^2$-Gaussian measure on $\mathbb C$ 
scaled by $(1+q)^{1/2}$, namely, $\rho_{\nu_{q,q}} = D_{(1+q)^{1/2}}(\rho_{\nu_{q^2}})$. 
\end{proof}

\begin{remark} 
If we use the fact that $h_n(-1 \mid q)=0$ 
for odd $n\geq1$ (see proof of Lemma \ref{lem Bargmann} below), 
we can extend the definition \eqref{bargmann2} 
to the case $-1<\alpha=q<0$. 
This will give an alternative way 
to define $\rho_{\nu_{q,q}}$ for $-1<q<0$,
but both definitions give the same measure. 
\end{remark}

We need some properties of the Rogers-Szeg\"o
 polynomials to know when the measure $\rho_{\nu_{\alpha,q}}$ becomes positive. 

\begin{lemma}[\cite{MGH90}]\label{lem Bargmann} 
Suppose that $q\in(-1,1)$. 
\begin{enumerate}[\rm(1)]
\item\label{lem Bargmann1} If $n\geq0$ is odd,
then $h_n(x\mid q)\geq0$ if and only if $x\geq-1$.  
Moreover, the point $x=-1$ is the unique zero of $h_n(x\mid q)$ on $\R$. 
\item\label{lem Bargmann2} 
If $n\geq0$ is even, then $h_n(x\mid q)>0$ for all $x \in\R$. 
\end{enumerate} 
\end{lemma}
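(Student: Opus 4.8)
The statement to prove is Lemma~\ref{lem Bargmann}, which concerns positivity and zeros of the Rogers--Szeg\"o polynomials $h_n(x\mid q)=\sum_{\ell=0}^n\qbinom{n}{\ell}{q}x^\ell$ for $q\in(-1,1)$.

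\medskip

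The plan is to exploit the palindromic (self-reciprocal) symmetry of the $q$-binomial coefficients, namely $\qbinom{n}{\ell}{q}=\qbinom{n}{n-\ell}{q}$, which gives the functional equation $h_n(x\mid q)=x^n\,h_n(1/x\mid q)$. First I would observe that all coefficients $\qbinom{n}{\ell}{q}$ are strictly positive for $q\in(-1,1)$: this is immediate from $\qbinom{n}{\ell}{q}=(q;q)_n/((q;q)_\ell(q;q)_{n-\ell})$ together with the fact that each factor $1-q^j$ has a fixed sign pattern, or more cleanly from the Gaussian binomial being a polynomial in $q$ with nonnegative integer coefficients evaluated at $q\in(-1,1)$ --- actually the cleanest route is the three-term recursion $\qbinom{n}{\ell}{q}=\qbinom{n-1}{\ell-1}{q}+q^\ell\qbinom{n-1}{\ell}{q}$ combined with $\qbinom{n}{\ell}{q}=q^{n-\ell}\qbinom{n-1}{\ell-1}{q}+\qbinom{n-1}{\ell}{q}$; adding suitable positive combinations is not needed, since positivity of the coefficients of the polynomial $\prod_{j=1}^{n}\frac{1-q^j}{1-q^{n-j+1}}$ type expression can simply be quoted. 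Hence for $x\ge 0$ we get $h_n(x\mid q)>0$ trivially when $x>0$, and $h_n(0\mid q)=1>0$, so the interesting range is $x<0$.

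\medskip

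For $x<0$, write $x=-t$ with $t>0$ and pair up the terms using palindromic symmetry. When $n$ is even, $n=2m$, group the term $\ell=m$ by itself and pair $\ell=m-j$ with $\ell=m+j$ for $j=1,\dots,m$; each pair contributes $\qbinom{2m}{m-j}{q}\bigl((-t)^{m-j}+(-t)^{m+j}\bigr)=\qbinom{2m}{m-j}{q}(-t)^{m-j}(1+t^{2j})$, and since $(-t)^{m-j}$ all have the common sign $(-1)^{m-j}$... this does not immediately give a fixed sign, so instead I would factor out $(-t)^m$ and write $h_{2m}(-t\mid q)=(-t)^m\bigl[\qbinom{2m}{m}{q}+\sum_{j=1}^m\qbinom{2m}{m-j}{q}(t^j+t^{-j})\bigr]$, valid for $t>0$; the bracket is a sum of strictly positive terms, hence nonzero, and $(-t)^m$ is nonzero, giving $h_{2m}(-t\mid q)\neq 0$ for all $t>0$. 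Since $h_{2m}(0\mid q)=1>0$ and $h_{2m}$ is continuous with no real zeros, it is strictly positive everywhere, which is part~(2). For $n=2m+1$ odd, the same factoring gives $h_{2m+1}(-t\mid q)=(-t)^m\bigl[\sum_{j=0}^m\qbinom{2m+1}{m-j}{q}\bigl((-t)^{1/...}\bigr)\bigr]$ --- more precisely, pairing $\ell=m-j$ with $\ell=m+1+j$ yields $\qbinom{2m+1}{m-j}{q}\bigl((-t)^{m-j}+(-t)^{m+1+j}\bigr)=\qbinom{2m+1}{m-j}{q}(-t)^{m-j}(1-t^{2j+1})$. Factoring $(-t)^m(1-t)$... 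I would instead directly note $h_{2m+1}(-t\mid q)=(-1)^m\sum_{j=0}^m\qbinom{2m+1}{m-j}{q}t^{m-j}(1-t^{2j+1})$, so the sign of $h_{2m+1}(-t\mid q)$ is $(-1)^m$ times a quantity that is $>0$ for $0\le t<1$, $=0$ at $t=1$, and $<0$ for $t>1$; translating back ($x=-t$, so $x>-1$ means $t<1$), this shows $h_n(x\mid q)$ has the sign of $(-1)^m$ on $(-1,\infty)$, vanishes precisely at $x=-1$, and has the opposite sign on $(-\infty,-1)$. To get the normalization claimed in the lemma ($h_n(x\mid q)\ge 0$ iff $x\ge -1$), I need $(-1)^m>0$, i.e. the convention should make the leading behaviour positive; checking $h_n(0\mid q)=1>0$ forces the sign on $(-1,\infty)$ to be $+$, so indeed $(-1)^m$ must be read off correctly and the pairing identity must be signed so that the $t<1$ regime gives positivity --- this is just bookkeeping of signs and I would verify it on small cases $n=1,3$.

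\medskip

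The main obstacle is purely the sign bookkeeping in the odd case: making sure the factored form genuinely exhibits a single sign change at $x=-1$ and that the overall sign is $+$ on $(-1,\infty)$, consistent with $h_n(0\mid q)=1$. Everything else --- positivity of the $q$-binomial coefficients for $q\in(-1,1)$, continuity, and the palindromic identity --- is elementary. I would present the proof by establishing positivity of the coefficients, then the identity $h_n(-t\mid q)=\sum$(paired terms), then read off the two cases; alternatively, since the lemma is attributed to \cite{MGH90}, it would also be legitimate to simply cite that reference and sketch the pairing argument as above for self-containedness.
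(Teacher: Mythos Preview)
Your pairing approach has a sign error that, once corrected, undermines the argument. In the even case $n=2m$, factoring out $(-t)^m$ actually gives
\[
h_{2m}(-t\mid q)=(-t)^m\Bigl[\qbinom{2m}{m}{q}+\sum_{j=1}^m(-1)^j\qbinom{2m}{m-j}{q}(t^j+t^{-j})\Bigr],
\]
because $(-t)^{-j}+(-t)^{j}=(-1)^j(t^{-j}+t^{j})$; the bracket is an \emph{alternating} sum, not a sum of positive terms, so non-vanishing does not follow. (For $m=1$ your formula produces $-(1+(1+q)t+t^2)$, the negative of the correct value $1-(1+q)t+t^2$.) The same slip occurs in the odd case: $(-t)^{m-j}=(-1)^m(-1)^{j}t^{m-j}$, so after pulling out $(-1)^m$ the remaining sum is still alternating in $j$, and the claimed sign for $0<t<1$ is no longer evident. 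Palindromic symmetry alone only tells you that zeros come in pairs $\{z_0,1/z_0\}$; it does not force real zeros to lie at $\pm1$.

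The paper's proof supplies precisely the missing ingredient: it quotes from \cite{MGH90} that \emph{all} zeros of $h_n(z\mid q)$ lie on the unit circle $|z|=1$, so the only candidate real zeros are $\pm1$. Positivity of the $q$-binomial coefficients (which you do argue) gives $h_n(1\mid q)>0$, and then the recurrence $h_{n+1}(-1\mid q)=(1-q^n)\,h_{n-1}(-1\mid q)$ together with $h_0(-1\mid q)=1$, $h_1(-1\mid q)=0$ settles the value at $x=-1$ by induction. If you want a self-contained argument, you need to establish the unit-circle statement (via Schur--Cohn/reflection-coefficient methods, as in \cite{MGH90}); the direct coefficient pairing is not enough.
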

\begin{proof}
It is known that all the zeros of $h_n(z\mid q)$ lie on the unit circle $|z|=1$.  See \cite{MGH90} or \cite[Theorem 1.6.11]{S05}. 
Note that the result was obtained for $q\in[0,1)$,
but the proof can be extended to $q\in(-1,1)$ without any modifications. 

By definition, one can see
\begin{equation*}
\qbinom{n}{\ell}{q} = \frac{(1-q^{n-\ell+1})(1-q^{n-\ell+2})\cdots (1-q^n)}{(1-q)(1-q^2)\cdots (1-q^\ell)}>0, 
\end{equation*}
and hence $h_n(1\mid q)>0$ for all $n\geq0$. 
Thus, $h_n(x\mid q) \neq0$ for $x\in \R\setminus\{-1\}$.   
It then suffices to show that $h_n(-1\mid q)>0$ for all even $n\geq0$ and $h_n(-1\mid q)=0$ for all odd $n \geq1$. 
The recurrence relation 
for the Rogers-Szeg\"{o} polynomials is known to be 
\begin{equation}\label{Rec RS}
h_{n+1}(z\mid q) = (z+1) h_n(z\mid q) -(1-q^n)z h_{n-1}(z\mid q),
\qquad n\geq1.
\end{equation}
See \cite[1.6.76]{S05} (note that formula (1.6.76) has an error of a sign). 
It is easy to see that $h_0(-1\mid q)=1>0, h_1(-1\mid q)=0$, 
so by induction and \eqref{Rec RS} one can show  $h_n(-1\mid q)>0$ for all even $n\geq0$ and $h_n(-1 \mid q)=0$ for all odd $n \geq1$.
\end{proof}

We need the following lemma in proof of Theorem \ref{thm main} 
for the non-existence part of a radial Bargmann measure.

\begin{lemma} \label{lem unique}
Let $\mu$ be a signed measure on $\R$ with compact support and let $\nu$ be a nonnegative measure on $\R$. If $\mu$ and $\nu$ have the same finite moments of all orders, then $\mu=\nu$. 
\end{lemma}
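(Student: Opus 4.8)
\textbf{Proof plan for Lemma \ref{lem unique}.}

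The plan is to reduce the equality of $\mu$ and $\nu$ to the density of polynomials in a suitable function space, exploiting that $\mu$ has compact support. First I would fix a compact interval $[-R,R]$ containing $\operatorname{supp}\mu$. The key observation is that, since $\mu$ and $\nu$ share all moments and $\mu$ is supported in $[-R,R]$, the measure $\nu$ must also be supported in $[-R,R]$: indeed one can estimate $\int |x|^{2k}\,d\nu = \int |x|^{2k}\,d|\mu| \le R^{2k}\,|\mu|(\R) =: C R^{2k}$, so $\nu\big(\{|x|>R+\varepsilon\}\big) \le C R^{2k}/(R+\varepsilon)^{2k}\to 0$ as $k\to\infty$ for every $\varepsilon>0$, forcing $\operatorname{supp}\nu\subseteq[-R,R]$ as well. (Here I use that $\nu$ is nonnegative and that its even moments agree with those of $\mu$, hence with those of $|\mu|$ up to the total-mass bound; if one prefers, one can simply bound $\int|x|^{2k}\,d\nu=\big|\int x^{2k}\,d\mu\big|\le C R^{2k}$ directly from the moment equality.)

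Once both measures live on the same compact interval $[-R,R]$, I would invoke the Weierstrass approximation theorem: polynomials are dense in $C([-R,R])$ in the uniform norm. The signed measure $\eta:=\mu-\nu$ is a finite signed Borel measure on $[-R,R]$ with $\int x^k\,d\eta = 0$ for all $k\ge 0$ by hypothesis, hence $\int p\,d\eta=0$ for every polynomial $p$. Given any $g\in C([-R,R])$ and $\varepsilon>0$, choose a polynomial $p$ with $\|g-p\|_\infty<\varepsilon$; then $\big|\int g\,d\eta\big| = \big|\int (g-p)\,d\eta\big| \le \varepsilon\,|\eta|([-R,R])$, and letting $\varepsilon\to 0$ gives $\int g\,d\eta=0$. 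Thus $\eta$ annihilates $C([-R,R])$, and by the Riesz representation theorem (uniqueness of the representing measure) $\eta=0$, i.e. $\mu=\nu$.

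There is essentially no serious obstacle here; the only point requiring a little care is the first step, namely confirming that $\nu$ is automatically compactly supported — without this, Weierstrass approximation on a compact set does not directly apply, and one would otherwise need a Carleman-type determinacy argument. Since the moment bound above gives compact support of $\nu$ for free, the Weierstrass/Riesz route closes the argument cleanly. I would present the proof in this order: (i) compact support of $\nu$, (ii) vanishing of $\eta$ on polynomials, (iii) density of polynomials in $C([-R,R])$ and conclusion via Riesz.
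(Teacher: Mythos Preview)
Your proof is correct and follows essentially the same route as the paper: bound the moments by $\|\mu\|R^n$ to force $\operatorname{supp}\nu\subseteq[-R,R]$, then apply Weierstrass approximation on $C([-R,R])$; the paper concludes by the Hahn rearrangement $\mu_+=\nu+\mu_-$ rather than invoking Riesz uniqueness on $\mu-\nu$, but this is the same idea. One caution: the equality $\int|x|^{2k}\,d\nu=\int|x|^{2k}\,d|\mu|$ is not correct as written, but your parenthetical fix $\int|x|^{2k}\,d\nu=\bigl|\int x^{2k}\,d\mu\bigr|\le\|\mu\|R^{2k}$ is exactly the bound the paper uses.
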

\begin{proof}
We  denote by $m_n$ the moments of $\mu$ (and $\nu$ by assumption). Since $\mu$ is compactly supported, say on $[-R,R]$,  
\begin{equation*}
|m_n| = \left|\int_{[-R,R]} x^n\,\mu(d x) \right| \leq \| \mu \| R^n,\qquad n\in\N\cup\{0\}, 
\end{equation*}
where $\|\mu\|$ denotes the total variation of $\mu$. Therefore, $\nu$ is also supported on $[-R,R]$. By Weierstrass' approximation, we have 
\begin{equation}\label{eq Hahn}
\int_{[-R,R]} f(x)\,\mu(d x) =\int_{[-R,R]} f(x)\,\nu(d x)
\end{equation}
for all $f\in C([-R,R])$. This implies that $\mu=\nu$ 
 since, if we use the Hahn decomposition $\mu=\mu_+ -\mu_-$, then \eqref{eq Hahn} implies 
\begin{equation*}
\int_{[-R,R]} f(x)\,\mu_+(d x) =\int_{[-R,R]} f(x) \,(\nu+\mu_-)(d x), 
\end{equation*} 
and hence $\mu_+ = \nu+\mu_-$ as nonnegative finite measures. 
\end{proof}

In summary, the complete answer to the unique existence 
of a radial Bargmann representation of $\nu_{\alpha,q}$ is
stated as follows:  
\begin{theorem}\label{thm main}
Suppose that $\alpha, q\in(-1,1)$. 
The probability measure $\nu_{\alpha,q}$ 
has a radial Bargmann representation if and only if either
 (i) $q \geq0$ and $\alpha \leq q$ or (ii) $\alpha=q\ne 0$. 

In fact, the radial measure
is given uniquely by 
\begin{equation*}
	\rho_{\nu_{\alpha,q}}= 
\begin{cases}
	-\alpha \delta_0+(1+\alpha)\delta_1 & (\alpha\leq q=0), \\
	\displaystyle
	(-\alpha,q;q)_\infty 
	\sum_{n=0}^\infty 	
	\frac{q^n}{(q;q)_n} 
	h_n(-\alpha q^{-1}\mid q) 
	\delta_{(1-q)^{-1/2}q^{n/2}} & (q>0, \ \alpha< q),\\
	\displaystyle
	(q^2;q^2)_\infty 
	\sum_{n=0}^\infty 
	\frac{q^{2n}}{(q^2;q^2)_n} 
	\delta_{(1-q)^{-1/2}
	|q|^n}
	& (\alpha=q\ne 0).
\end{cases}
\end{equation*}
\end{theorem}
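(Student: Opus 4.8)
The plan is to separate the theorem into an existence part and a non-existence part, using the explicit constructions from Proposition \ref{bargmann} and Proposition \ref{thm a=q} for the former and Lemma \ref{lem Bargmann} together with Lemma \ref{lem unique} for the latter. For the existence direction, when $q>0$ and $\alpha<q$ I would take $\rho_{\nu_{\alpha,q}}$ as defined in \eqref{bargmann2}; Proposition \ref{bargmann} already verifies the moment identity \eqref{eq moments}, so the only remaining point is positivity of the coefficients $\frac{q^n}{(q;q)_n}h_n(-\alpha q^{-1}\mid q)$. Since $q\in(0,1)$, the prefactor $\frac{q^n}{(q;q)_n}$ and the global constant $(-\alpha,q;q)_\infty$ are positive, so positivity reduces to $h_n(-\alpha q^{-1}\mid q)\geq 0$ for all $n\geq 0$. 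The condition $\alpha<q$ (together with $\alpha>-1$) gives $-\alpha q^{-1}>-1$, and then Lemma \ref{lem Bargmann} yields $h_n(-\alpha q^{-1}\mid q)>0$ for even $n$ and $h_n(-\alpha q^{-1}\mid q)\geq 0$ (in fact $>0$ since the argument is strictly greater than $-1$) for odd $n$. The case $\alpha\leq q=0$ is handled by the two-point measure $-\alpha\delta_0+(1+\alpha)\delta_1$, which is a genuine probability measure precisely when $\alpha\leq 0$, and whose moments are visibly $0+\left(1+\alpha\right)=1+\alpha=(-\alpha;q)_k[k]_q!$ at $q=0$ for $k\geq 1$ and $1$ for $k=0$. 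The case $\alpha=q\ne 0$ is exactly Proposition \ref{thm a=q}, which already supplies the measure and the moment computation; note this includes $q<0$.

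For the non-existence direction I would argue by contradiction. Suppose $\rho_{\nu_{\alpha,q}}$ is a (nonnegative) radial Bargmann measure for $\nu_{\alpha,q}$, so its even moments are $(-\alpha;q)_k[k]_q!$. By the inequality \eqref{eq:aq-ineq} and the Carleman criterion, this moment sequence determines the measure uniquely among compactly supported signed measures as well — here is where Lemma \ref{lem unique} enters: the formal signed measure from \eqref{bargmann2} (when $q>0$) or its $q=0$ analogue has exactly these moments and compact support, so any nonnegative solution must coincide with it. Hence the only candidate is the explicit signed measure written down earlier, and the obstruction to existence is precisely that some of its atoms carry negative mass. For $q>0$ and $\alpha>q$ we have $-\alpha q^{-1}<-1$, and by Lemma \ref{lem Bargmann}\eqref{lem Bargmann1} there is an odd $n$ (indeed every odd $n$) with $h_n(-\alpha q^{-1}\mid q)<0$, so the candidate measure is not nonnegative — contradiction. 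For $q=0$ and $\alpha>0$ the candidate $-\alpha\delta_0+(1+\alpha)\delta_1$ has negative mass at $0$, again a contradiction.

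The genuinely delicate case is $q<0$ with $\alpha\neq q$, where the atoms $\delta_{(1-q)^{-1/2}q^{n/2}}$ in \eqref{bargmann2} are not even real (since $q^{n/2}$ is imaginary for odd $n$), so the formula does not define a measure on $[0,\infty)$ at all. Here I would instead show directly that the moment sequence $(-\alpha;q)_k[k]_q!$ cannot be the moment sequence of any nonnegative measure on $[0,\infty)$: by the Hamburger/Stieltjes positivity conditions it suffices to exhibit a Hankel determinant (of the sequence itself, or of its once-shifted version) that is negative. Equivalently, one can use the known fact (from \cite{LM95}, recalled in the remark after \eqref{eq:radial-qgauss}) that $\rho_{\nu_q}$ does not exist for $q<0$, and then observe that multiplicative convolution by the finitely supported positive factor $\rho_{\alpha,q}$ cannot repair positivity unless the $\alpha$-dependent factor exactly cancels the offending sign — which, by the computation in Proposition \ref{thm a=q}, happens only when $\alpha=q$ (turning $[k]_q!$-type moments into $[k]_{q^2}!$-type moments governed by the positive parameter $q^2$). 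I expect this $q<0$, $\alpha\neq q$ sub-case to be the main obstacle, because it is the one place where the clean ``read off the sign of a Rogers–Szegő value'' strategy does not apply and one must argue at the level of moment positivity (Hankel determinants) or reduce carefully to the $\alpha=0$ non-existence result; the rest of the theorem is essentially bookkeeping over the already-established propositions and lemmas.
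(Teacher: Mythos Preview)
Your existence argument and the non-existence argument for $q\geq 0$ match the paper's essentially line for line. The genuine gap is exactly where you flag it: the case $q<0$, $\alpha\neq q$. Neither of your two suggested routes is carried out, and both have problems as stated. The Hankel-determinant route would in principle work but you give no computation, and it is not clear which minor to pick uniformly in $(\alpha,q)$. The Mellin-convolution route is not available: for $q<0$ the measure $\rho_{\alpha,q}$ of Lemma~\ref{lem bargmann} is not even defined (its atoms sit at $q^{n/2}$), it is not finitely supported, and ``convolution cannot repair positivity unless $\alpha=q$'' is an assertion, not an argument.

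The paper closes this case with a trick you did not find. One passes from the putative radial measure $\rho$ to its push-forward $\rho^2$ under $x\mapsto x^2$, so that the moment problem becomes $\int_0^\infty x^k\,\rho^2(dx)=\beta_k(\alpha,q):=(-\alpha;q)_k[k]_q!$. For $q'>0$ one has, from Proposition~\ref{bargmann}, the identity
\[
\beta_k(\alpha,q')=(-\alpha,q';q')_\infty\sum_{n\geq 0}\frac{(q')^n}{(q';q')_n}\,h_n(-\alpha/q'\mid q')\,\frac{(q')^{kn}}{(1-q')^k},
\]
and this identity continues analytically to $q'=q<0$. The right-hand side is then the $k$th moment of the compactly supported \emph{signed} measure
\[
\mu=(-\alpha,q;q)_\infty\sum_{n\geq 0}\frac{q^n}{(q;q)_n}\,h_n(-\alpha q^{-1}\mid q)\,\delta_{q^n/(1-q)},
\]
whose atoms $q^n/(1-q)$ are real but lie at \emph{negative} points for odd $n$. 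Lemma~\ref{lem unique} then forces $\mu=\rho^2$, contradicting $\mathrm{supp}(\rho^2)\subset[0,\infty)$. The key idea you are missing is precisely this squaring step: it converts the ill-defined atoms at $q^{n/2}$ into well-defined real atoms at $q^n/(1-q)$, and turns the obstruction into a clean support mismatch rather than a sign or Hankel-positivity computation.
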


\begin{proof}
{\bf 1.\ Existence and uniqueness.} 
If $q\in [0,1)$ and $\alpha \leq q$, then 
by Proposition \ref{bargmann} and Lemma \ref{lem Bargmann}, 
the signed measure $\rho_{\nu_{\alpha,q}}$ is 
in fact a nonnegative measure 
and becomes the radial part of a Bargmann measure. 
The case $\alpha=q<0$ was discussed in Proposition \ref{thm a=q}.
Due to Carleman criterion for the moment problem,
the inequality given in \eqref{eq:aq-ineq}
guarantees the uniqueness of $\rho_{\nu_{\alpha,q}}$
for these cases.

\noindent
{\bf 2.\ Non-existence.} 
(1) If $q\in (0,1)$ and $\alpha > q$, then 
$\rho_{\nu_{\alpha,q}}$ is not a nonnegative measure and is really a signed measure since $h_n(-\alpha/q\mid q) <0$ for odd integers $n\geq0$ and $q>0$ from Lemma \ref{lem Bargmann}. By Lemma \ref{lem unique}, if a radial Bargmann measure exists, then it must be equal to the signed measure $\nu_{\alpha,q}$.
This is a contradiction. 
Thus, a radial Bargmann measure does not exist. 

\noindent
(2) If $q=0$ and $\alpha>q=0$ then by \eqref{bargmann2} $\nu_{\alpha,0}$ is really a signed measure, and hence by the same argument as above, a radial Bargmann measure does not exist. 

\noindent
(3) Let 
$$
\beta_k(\alpha,q):= (-\alpha;q)_k [k]_q!, \qquad k \geq0, \alpha,q \in(-1,1). 
$$
Given $q<0$ and $\alpha \neq q$, suppose that 
there exists a radial part of a Bargmann measure, $\rho$. Let $\rho^2$ be the push-forward of $\rho$ 
by the map $x\mapsto x^2$. Then,   
\begin{equation}\label{eq087}
\beta_k(\alpha,q)=\int_0^\infty x^{k} \,\rho^2(d x) = \int_0^\infty x^{2 k} \,\rho(d x).   
\end{equation}
By the way, by Proposition \ref{bargmann} it holds that $\beta_k(\alpha,q')  = \int_0^\infty x^{2 k} \,\rho_{\nu_{\alpha,q'}}(d x)$ for any $q'>0$, that is, 
\begin{equation}\label{eq088}
\beta_k(\alpha,q')  = (-\alpha,q';q')_\infty \sum_{n=0}^\infty \frac{(q')^n}{(q';q')_n} h_n(-\alpha (q')^{-1}\mid q') \frac{(q')^{k n}}{(1-q')^k},\qquad q'>0,  
\end{equation}
 which is true even for $q'=q$ by analytic continuation. 

Now let us consider the signed measure  
\begin{equation*}
\mu:= (-\alpha,q;q)_\infty 
\sum_{n=0}^\infty \frac{q^n}{(q;q)_n} 
h_n(-\alpha q^{-1}\mid q) \delta_{(1-q)^{-1}q^n}, 
\qquad\alpha \neq q <0, 
\end{equation*}
supported on the points 
$\frac{q^n}{1-q}$ for $n=0,1,2,3,\dots$. 
Then by \eqref{eq088} for $q'=q$ and by \eqref{eq087}, 
\begin{equation*}
	\int_{\R} x^k \,\mu(d x) 
	= \beta_k (\alpha,q)
	= \int_0^\infty x^{k} \,\rho^2(d x),\qquad k\in\N\cup\{0\}. 
\end{equation*}
By Lemma \ref{lem unique}, the signed measure $\mu$ and the probability measure $\rho^2$ should be equal. However, the support of $\mu$ is not contained in $[0,\infty)$, 
and hence $\mu$ cannot be equal to $\rho^2$. 
This is a contradiction.  
\end{proof}

\begin{example}\label{ex:(4)}
\begin{enumerate}[(1)]
\item 
The radial measure $\rho_{\nu_{0,q}}$ for $q\in [0,1)$ is 
of the $q$-Bargmann \cite{LM95}.

\item 
The radial measure $\rho_{\nu_{q,q}}$ 
for $q\in(-1,1)$ is 
of the $q^2$-Bargmann.

\item
$\displaystyle\lim_{q\uparrow 1}\rho_{\nu_{\alpha,q}}$ 
is of the classical Bargmann \cite{Barg61}\cite{AKK03}.

\item 
Consider $\alpha=-q^{2\beta}, \beta>0$.  
This choice of $\alpha$ is suggested by \eqref{eq:Pochhammer}
in Remark \ref{eq:Pochhammer}. 
In fact, one can see
$$
	\lim_{q\uparrow 1}\frac{(1-q^{2\beta+n-1})[n]_q}{4(1-q)}
	=\frac{1}{4}(n+2\beta-1)n.
$$
This limit sequence is the Jacobi sequence 
of the symmetric Meixner distribution in 
\eqref{eq:symmetric-meixner}, 
so that 
$\rho_{\nu_{-q^{2\beta},q}}$ under suitable 
scaling converges weakly as $q\uparrow 1$
to the radial measure with the density,
\begin{equation*}
	\frac{2\pi r}{\varGamma(2\beta)}
	\int_0^\infty h(r, t/4)e^{-t}t^{2\beta-1}dt
\end{equation*}
where 
\begin{equation*}
	h(r,t)
	=\frac{1}{\pi t}\exp
	\left(
	-\frac{r^2}{t}
	\right), 
	\ r\in {\mathbb R}, \ t>0.
\end{equation*}
This is an integral representation of the radial density 
for the Bessel kernel measure, 
which can be also represented 
by the modified Bessel function \cite{As05}\cite{As09}.

\item \label{ex:(5)}
$\rho_{\nu_{\alpha,0}}$ for 
$\alpha\in (-1,0]$ is the radial measure for 
the symmetric free Meixner distribution.
See Remark \ref{rem:t-prob} below.
\end{enumerate}
\end{example}

\begin{remark}\label{rem:t-prob}
Let $\mu_t$ be a $t$-deformed probability
measure of a probability measure $\mu$ on $\mathbb R$
defined through the Cauchy transform $G_\mu$ of $\mu$, 
\begin{equation*}
	\frac{1}{G_{\mu_t}(z)}
	:= \frac{t}{G_{\mu}(z)}+(1-t)z,
	\quad t\geq 0,
\end{equation*}
examined by Bo\.zejko-Wysocza\'nski \cite{BW98, BW01}.
Krystek-Wojakowski \cite{KW14}
discussed the radial Bargmann representation of a $t$-deformed probability
measure $\mu_t$, $t$-Bargmann representation for short,  
and obtained necessary and sufficient condition
for the admissibility of the representation.
The $t$-Bargmann representation of 
the Kesten measure $\kappa_t$ has the form, 
\begin{equation*}
	\rho_{\kappa_t}	
	= \left(1-\frac{1}{t}\right)\delta_0
	+\frac{1}{t}\delta_{\sqrt{t}},
	\quad t\geq 1.
\end{equation*}
In \cite{AKW16}, 
the $t$-Bargmann representation of a symmetric 
free Meixner law $\varphi_{s,t}$ with 
two positive parameters $s, t$ is treated and 
is admitted if and only if 
$t\geq 1$.  In fact, one can see 
$\rho_{\varphi_{s,t}}= D_s (\rho_{\kappa_t})$
and hence  
$$
	\rho_{\nu_{(1-t)/t,0}}=\rho_{\varphi_{1/\sqrt{t},t}}
	=D_{1/\sqrt{t}}(\rho_{\kappa_t}),
	\quad t\geq 1.
$$
Therefore, the case \eqref{ex:(5)} in Example \ref{ex:(4)} 
can be viewed as a $t$-Bargmann representation, too. 
\end{remark}
Furthermore, let us state the $t$-deformed version 
of Theorem \ref{thm main} 
for $q\ne 0$ without proof:
\begin{proposition}
The $t$-deformed version of $\rho_{\nu_{\alpha,q}}$
for  $q\ne 0$
is given by 
\begin{equation*}
	\left(
	1-\frac{1}{t}
	\right)\delta_0
	+
	\frac{1}{t}\rho_{\nu_{\alpha,q}}, 
	\quad t\geq 1.
\end{equation*}
\end{proposition}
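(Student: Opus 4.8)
The plan is to reduce the $t$-deformed statement to the already-established Theorem~\ref{thm main} by exploiting the definition of $t$-deformation through the Cauchy transform and, more concretely, its effect on the orthogonality/Jacobi data. First I would recall that $t$-deformation of $\mu$, as defined in Remark~\ref{rem:t-prob}, acts on the Jacobi parameters $(\{\omega_n\},\{\alpha_n\})$ of a symmetric measure $\mu$ by the simple rule $\omega_1 \mapsto t\,\omega_1$, with all other $\omega_n$ ($n\geq 2$) and all $\alpha_n$ unchanged; this is the standard description of the free $t$-deformation on one-mode interacting Fock spaces (cf.\ \cite{BW01, KW14}). Since $\nu_{\alpha,q}$ is symmetric with $\omega_n = (1+\alpha q^{n-1})[n]_q$ and $\omega_1 = 1+\alpha$, the $t$-deformed measure $(\nu_{\alpha,q})_t$ has Jacobi sequence $\tilde\omega_1 = t(1+\alpha)$, $\tilde\omega_n = (1+\alpha q^{n-1})[n]_q$ for $n\geq 2$, and the associated moment sequence for the Bargmann problem becomes $[\tilde\omega_n]! = t\,(-\alpha;q)_k[k]_q!$ for $k\geq 1$ and $1$ for $k=0$.

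Next I would verify directly that the candidate measure $\big(1-\tfrac1t\big)\delta_0 + \tfrac1t\,\rho_{\nu_{\alpha,q}}$ has exactly these moments: for $k\geq 1$ the $\delta_0$ contributes nothing, so $\int_0^\infty r^{2k}\,d\big[(1-\tfrac1t)\delta_0 + \tfrac1t\rho_{\nu_{\alpha,q}}\big](r) = \tfrac1t \cdot (-\alpha;q)_k[k]_q! \cdot t = (-\alpha;q)_k[k]_q!$ — wait, one must be careful: the target is $[\tilde\omega_k]! = t\cdot(-\alpha;q)_k[k]_q!$, and $\tfrac1t\int_0^\infty r^{2k}\,\rho_{\nu_{\alpha,q}}(dr) = \tfrac1t(-\alpha;q)_k[k]_q!$, so one should instead take the $t$-deformed radial measure applied to the \emph{rescaled} problem, or equivalently observe that the normalization is absorbed because the $k=0$ moment of $\big(1-\tfrac1t\big)\delta_0 + \tfrac1t\rho_{\nu_{\alpha,q}}$ is $1$ while that of $t\,\rho_{\nu_{\alpha,q}}$ is $t$; matching the \emph{relative} moments $[\tilde\omega_k]!/[\tilde\omega_0]!$ is what is required, and these agree. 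This bookkeeping about which normalization convention is in force is the one genuinely delicate point and should be spelled out explicitly, ideally by re-deriving \eqref{eq:bargmann-def} for the deformed $\omega$'s.

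Then I would address positivity and the admissibility range $t\geq 1$: the weight $1-\tfrac1t$ on $\delta_0$ is nonnegative precisely when $t\geq 1$, and $\tfrac1t\rho_{\nu_{\alpha,q}}$ is nonnegative exactly when $\rho_{\nu_{\alpha,q}}$ is, i.e.\ under the conditions of Theorem~\ref{thm main}; for $t\in(0,1)$ the $\delta_0$ coefficient is negative and one argues non-existence just as in part~2 of the proof of Theorem~\ref{thm main}, invoking Lemma~\ref{lem unique} together with the Carleman-type bound from \eqref{eq:aq-ineq} (which still holds after multiplying by the bounded factor $t$ or $1/t$) to conclude that any radial Bargmann measure would have to coincide with the signed measure, a contradiction. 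Finally, uniqueness follows from Carleman's criterion exactly as before, since the moments $t\,(-\alpha;q)_k[k]_q!$ satisfy the same growth bound up to the constant $t$. The main obstacle I anticipate is not conceptual but notational: pinning down precisely how the $t$-deformation transforms the Bargmann moment problem \eqref{eq:bargmann-def}–\eqref{prob:aq-radial}, since the deformation rescales only $\omega_1$ and this interacts with the probability normalization in a way that must be tracked carefully to land on the clean formula $\big(1-\tfrac1t\big)\delta_0 + \tfrac1t\rho_{\nu_{\alpha,q}}$ rather than a rescaled variant; once that is settled, everything else is a routine transcription of the arguments already given for Theorem~\ref{thm main} and in \cite{KW14, AKW16}.
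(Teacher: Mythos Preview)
The paper offers no proof of this Proposition; it is stated explicitly ``without proof,'' the implicit justification being the general $t$-Bargmann framework of \cite{KW14,AKW16} combined with Theorem~\ref{thm main}. Your outline---identify how $t$-deformation alters the Jacobi sequence, verify the moments of the candidate, then argue positivity for $t\ge1$, uniqueness via Carleman, and non-existence for $t<1$ via Lemma~\ref{lem unique}---is exactly the argument one would supply, and the last three steps go through as you describe.

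The gap is in the moment-verification step, and it is not merely bookkeeping. From the Cauchy-transform definition in Remark~\ref{rem:t-prob} you correctly obtain $\tilde\omega_1=t(1+\alpha)$, hence $[\tilde\omega_k]!=t\,(-\alpha;q)_k[k]_q!$ for $k\ge1$; and you correctly compute that the candidate $(1-\tfrac1t)\delta_0+\tfrac1t\,\rho_{\nu_{\alpha,q}}$ has $2k$-th moment $\tfrac1t\,(-\alpha;q)_k[k]_q!$. These differ by a factor of $t^2$, and your proposed fix---matching ``relative moments $[\tilde\omega_k]!/[\tilde\omega_0]!$''---is vacuous, since $[\tilde\omega_0]!=1$ by definition, so relative and absolute moments coincide. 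The discrepancy is real: the measure $(1-\tfrac1t)\delta_0+\tfrac1t\,\rho_{\nu_{\alpha,q}}$ is the radial Bargmann measure for the law whose first Jacobi coefficient is $(1+\alpha)/t$, i.e.\ for $(\nu_{\alpha,q})_{1/t}$ in the convention of Remark~\ref{rem:t-prob}. You can confirm this from the free-Meixner example already worked out there: $\nu_{(1-t)/t,0}$ has $\omega_1=1/t$, $\omega_n=1$ for $n\ge2$, and its radial measure is exactly $(1-\tfrac1t)\delta_0+\tfrac1t\,\rho_{\nu_{0,0}}$. So what you must do is reconcile the parameter convention with \cite{KW14} (or simply take the target moment sequence to be $\tfrac1t\,(-\alpha;q)_k[k]_q!$); once that single line is corrected the moment check is immediate and the rest of your argument is complete.
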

\begin{remark}
The $t$-Bargmann representation of $\nu_q$ is 
treated in \cite{KW14} for $q=1$
and \cite{AKW16} for $0\leq q<1$.
\end{remark}

Before closing this section, let us give a short remark 
about relations with the free infinite divisibility.
Many of particular examples have so far suggested 
that the free infinite divisibility of a probability measure 
implies the existence of a radial Bargmann representation.
The converse is not true in general because
the Askey-Wimp-Kerov distribution $\mu_{9/10}$ for instance, 
discussed in \cite{BBLS11}, 
is not freely infinitely divisible, 
but it has a Bargmann representation with a gamma 
distribution as its radial measure.
However, not many counterexamples have been found. 

Therefore, we conjecture that the free infinite divisibility of 
our $(\alpha,q)$-Gaussian distribution is equivalent 
to the existence of its radial Bargmann measure:

\medskip
\noindent 
{\bf Conjecture.}
Suppose that $\alpha,q \in (-1,1)$. 
The probability measure $\nu_{\alpha,q}$ is 
freely infinitely divisible 
if and only if
if and only if $\alpha =q$ or $\alpha<q\geq0$. 

\medskip
This conjecture is guaranteed to be true in the restricted subfamilies $\{\nu_{\alpha,0} \mid \alpha\in(-1,1)\}$ (\cite[Theorem 3.2]{SY01}), $\{\nu_{0,q}\mid -1< q<1\}$ (\cite{ABBL10} and \cite[Example 3.11]{AH13} for the free infinite divisibility), and $\{\nu_{q,q} \mid q\in(-1,1)\}$ (all measures in this family are freely infinitely 
divisible since they are $q^2$-Gaussians). 

%
\section{Commutation relations among one-mode
$(\alpha,q)$-operators}\label{sec:aq-commutation}

\begin{definition}
Suppose that $\alpha, q\in (-1,1)$ 
and $f$ is analytic on $\mathbb C$. 
\begin{enumerate}[\rm(1)]
\item
Let $Z$ be the multiplication operator defined by 
\begin{equation*}
	(Zf)(z):=zf(z). 
\end{equation*}
\item
Let $D_q$ be the Jackson derivative given by
\begin{equation*}
	(D_q f)(z)=
\begin{cases}
	\displaystyle\frac{f(z)-f(q z)}{(1-q)z}, & z\ne 0,\\
	f'(0), & z=0.
\end{cases}
\end{equation*}
\item
The $\alpha$-deformed Jackson derivative is given as 
\begin{equation*}
D_{\alpha,q}:=
\begin{cases}
	 D_q + \alpha q^{2 N} D_{1/q}, & q\ne 0, \\
	 D_0 + \alpha\frac{d}{d z}\big|_{0}, & q=0,
\end{cases}
\end{equation*}
where $N$ is the number operator. 
For $q\neq0$, we can also write 
\begin{equation*}
D_{\alpha,q}= D_q + \frac{\alpha}{q^2} D_{1/q} q^{2 N}. 
\end{equation*}
\end{enumerate}
\end{definition}

\begin{remark}
It is easy to check that the 
$\alpha$-deformed Jackson derivative 
is equivalently defined as 
\begin{equation*}
	(D_{\alpha,q}f)(z) = (D_q f)(z)
	+\alpha(D_{1/q} f)(q^2 z),
	\quad q\ne 0.
\end{equation*}
For example, if $f(z)=z^n$,
$(D_{\alpha,q}f)(z)=(1+\alpha q^{n-1})[n]_qz^{n-1}$
holds.  In fact,
the $\alpha$-deformed Jackson derivative is an analogue of 
the operator in \cite[Theorem2.5]{BEH15}.
\end{remark}

Then, one can realize one-mode analogue of  
$(\alpha,q)$-operators on an appropriate domain
of the one-mode interacting Bargmann-Fock space $\mathcal B$ 
with $\omega_n=(1+\alpha q^{n-1})[n]_q$ and $\alpha_n=0$ by 
$$a^+:=Z, \ a^-:=D_{\alpha,q}, \ \text{and} \ \Phi_n:=\frac{z^n}{\sqrt{[\omega_n]!}}.$$
In fact, it is easy to check that 
\begin{equation*}
\begin{cases}
	a^+\Phi_n = \sqrt{\omega_{n+1}}\Phi_{n+1},\\
	a^{-}\Phi_n = \sqrt{\omega_n}\Phi_{n-1},
\end{cases}
\end{equation*}
hold and the $q$-commutation relation,
one-mode analogue of \eqref{eq:aq-commutation},
\begin{align*}
	[a^-,a^+]_q\Phi_n 
	& := (a^-a^+-qa^+a^-)\Phi_n
	\\
	& = (I+\alpha q^{2N})\Phi_n,
\end{align*}
is satisfied.
Let us put 
$M_{\alpha,q}= I+\alpha q^{2N}$
and then one can get the expression, 
\begin{equation*}
	\displaystyle M_{\alpha,q} 
	= (1+\alpha)I-\alpha (1-q^2)Z D_{q^2},
\end{equation*}
due to  $(ZD_{q^2})\Phi_{n}=[n]_{q^2}\Phi_{n}$.

Therefore one can obtain the following
\begin{theorem}
Suppose $\alpha\in (-1,1)$ and $q\in (-1,1)$.
Then the following are satisfied. 

\begin{enumerate}[\rm(1)]
\item
$[a^-,a^+]_q= M_{\alpha,q}, \ \
[a^-, M_{\alpha,q}]_{q^2}=(1-q^2)a^-,\ \ 
[M_{\alpha,q},a^+]_{q^2}=(1-q^2)a^+$.
\item
$\displaystyle M_{\alpha,q} 
= (1+\alpha)I
-\alpha (1-q^2) 
Z D_{q^2}$.
\item
In particular, if $\alpha=q$, then 
one can obtain a more refined relation,
$[a^-,a^+]_{q^2}= (1+q)I$.
\end{enumerate}
\end{theorem}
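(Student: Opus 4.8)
The plan is to verify the three families of commutation relations by evaluating both sides on the orthonormal basis $\{\Phi_n\}$ of $\mathcal B$, since each operator acts diagonally or as a weighted shift on this basis, and a linear operator on $\mathcal B$ is determined by its values on $\{\Phi_n\}$. Recall that $a^+\Phi_n=\sqrt{\omega_{n+1}}\,\Phi_{n+1}$ and $a^-\Phi_n=\sqrt{\omega_n}\,\Phi_{n-1}$ with $\omega_n=(1+\alpha q^{n-1})[n]_q$, and that $M_{\alpha,q}\Phi_n=(1+\alpha q^{2n})\Phi_n$. First I would record the scalar identity that drives everything: $\omega_{n+1}-q\,\omega_n=(1+\alpha q^{n})[n+1]_q-q(1+\alpha q^{n-1})[n]_q$, which collapses using $[n+1]_q-q[n]_q=1$ and $q^n[n+1]_q-q^n[n]_q=q^{2n}$ to give $\omega_{n+1}-q\,\omega_n=1+\alpha q^{2n}$. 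This is exactly the content of $[a^-,a^+]_q=M_{\alpha,q}$, since $(a^-a^+-q a^+a^-)\Phi_n=(\omega_{n+1}-q\,\omega_n)\Phi_n$. The relation $M_{\alpha,q}=(1+\alpha)I-\alpha(1-q^2)ZD_{q^2}$ in item (2) is immediate from $(ZD_{q^2})\Phi_n=[n]_{q^2}\Phi_n$ and $1+\alpha q^{2n}=(1+\alpha)-\alpha(1-q^{2n})=(1+\alpha)-\alpha(1-q^2)[n]_{q^2}$, as already noted in the text just before the theorem.

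Next I would handle the two mixed relations in item (1). For $[a^-,M_{\alpha,q}]_{q^2}=a^-M_{\alpha,q}-q^2 M_{\alpha,q}a^-$, applying to $\Phi_n$ gives $\sqrt{\omega_n}\big((1+\alpha q^{2n})-q^2(1+\alpha q^{2(n-1)})\big)\Phi_{n-1}=\sqrt{\omega_n}(1-q^2)\Phi_{n-1}=(1-q^2)a^-\Phi_n$, using $q^{2n}-q^2 q^{2n-2}=0$ so only the $1-q^2$ term survives. Symmetrically, $[M_{\alpha,q},a^+]_{q^2}\Phi_n=\big((1+\alpha q^{2(n+1)})-q^2(1+\alpha q^{2n})\big)\sqrt{\omega_{n+1}}\,\Phi_{n+1}=(1-q^2)\sqrt{\omega_{n+1}}\,\Phi_{n+1}=(1-q^2)a^+\Phi_n$. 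So items (1) and (2) reduce to these one-line scalar checks on each basis vector.

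For item (3), the specialization $\alpha=q$: here $\omega_n=(1+q^{n})[n]_q$ and the claim is $a^-a^+-q^2 a^+a^-=(1+q)I$ on $\mathcal B$. Applied to $\Phi_n$ this is the scalar identity $\omega_{n+1}-q^2\omega_n=1+q$. I would verify it directly: $\omega_{n+1}-q^2\omega_n=(1+q^{n+1})[n+1]_q-q^2(1+q^{n-1})[n]_q$; expanding, $[n+1]_q-q^2[n]_q=[n+1]_q-q[n]_q+(q-q^2)[n]_q=1+(q-q^2)[n]_q$, and $q^{n+1}[n+1]_q-q^{n+1}[n]_q=q^{n+1}$, so altogether $\omega_{n+1}-q^2\omega_n=1+(q-q^2)[n]_q+q^{n+1}$. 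Then $(q-q^2)[n]_q+q^{n+1}=q(1-q)\frac{1-q^n}{1-q}+q^{n+1}=q(1-q^n)+q^{n+1}=q$, giving $\omega_{n+1}-q^2\omega_n=1+q$, as desired. Alternatively one can get (3) from (1): when $\alpha=q$ one has $M_{q,q}=I+q\,q^{2N}$, and combining $[a^-,a^+]_q=I+q\,q^{2N}$ with the commutation of $q^{2N}$ past $a^\pm$ yields the cleaner $q^2$-relation.

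The computations are all routine; the only point requiring a little care is the bookkeeping with the two exponents $q^n$ and $q^{2n}$ and making sure the telescoping $[n+1]_q-q[n]_q=1$ is applied at the right places, plus confirming that verifying an operator identity on the orthonormal basis $\{\Phi_n\}$ of the dense domain $\mathrm{Span}\{\Phi_n\}$ is legitimate — which it is, since all operators in sight ($a^\pm$, $Z$, $D_{q^2}$, $q^{2N}$) preserve this span and a linear map on a vector space is determined by its action on a basis. I do not anticipate a genuine obstacle, so I would present the proof as: (i) reduce each identity to a scalar recurrence in $\omega_n$ via the basis, (ii) verify the recurrences using $[n+1]_q-q[n]_q=1$, and (iii) note the alternative derivation of (3) from (1).
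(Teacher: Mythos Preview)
Your proof is correct and follows exactly the approach the paper takes: verify each operator identity on the basis $\{\Phi_n\}$ using the explicit actions of $a^\pm$ and $M_{\alpha,q}$; in fact the paper gives no formal proof, treating the theorem as immediate from the computations preceding its statement (which are precisely the ones you wrote out for the first relation in (1) and for (2)). One minor slip: in your check of (3) you write $q^2(1+q^{n-1})[n]_q$ where it should be $q^2(1+q^{n})[n]_q$, but your subsequent arithmetic in fact uses the correct expression (the second piece is $q^{n+1}[n+1]_q-q^{n+2}[n]_q=q^{n+1}$) and reaches the right conclusion.
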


\begin{example}
\begin{enumerate}[\rm(1)]
\item
$\alpha=0$ implies $[a^-,a^+]_q=I$.
Hence $M_{0,q}=I$ commutes with both $a^+$ and $a^-$,
\begin{equation*}
	[a^-, M_{0,q}]_{1}=[M_{0,q},a^+]_{1}=0.
\end{equation*}

Therefore, the case $\alpha\ne 0$ provides 
non-trivial commutation relations.
\item
If $\alpha=-q^{2\beta}$ for $\beta> 0$,
then the limiting case of the scaled operator 
is obtained as
\begin{align*}
	\lim_{q \uparrow 1}
	\frac{M_{-q^{2\beta},q}}{1-q^2} 
	& = \lim_{q \uparrow 1}
	\frac{I-q^{2\beta}q^{2N}}{1-q^2}= N+\beta. 
\end{align*}
Moreover, let us consider the scaled operators, 
\begin{equation*}
	A^\pm:= \lim_{q\uparrow 1}
	\frac{a^{\pm}}{\sqrt{1-q^2}}.
\end{equation*}
Then one can get  
\begin{equation*}
[A^-,A^+]_1 = N+\beta
\end{equation*}
and hence  
\begin{equation*}
[A^-, N]_1=A^-, \ [N, A^+]_1=A^+. 
\end{equation*}
It should be noted that 
these are the commutation relations for the classical 
Meixner-Pollaczek polynomials with respect to 
the symmetric Meixner distribution in \eqref{eq:symmetric-meixner}.
See \cite{As08}. 
\end{enumerate}
\end{example}

\bigskip
\noindent
{\bf Acknowledgments.}
N. Asai was partially supported by Grant-in-Aid 
for Scientific Research (C) 23540131, 
JSPS.  \ M. Bo\.zejko was partially supported 
by the MAESTRO grant DEC-2011/02/A/
ST1/00119 and OPUS grant DEC-2012/05/B/ST1/00626 
of National Center of Science.
T. Hasebe was supported by Grant-in-Aid 
for Young Scientists (B) 15K17549, JSPS.
%
\appendix
\section{Appendix}\label{sec:appendix}
Let $\Sigma_n$ be the set of bijections $\sigma$ of 
the $2n$ points 
$\{\pm 1,\pm 2,\cdots,\pm n\}$ with $\sigma(-k)=-\sigma(k)$.
Equipped with the composition operation as a product,
$\Sigma_n$ becomes what is called 
a Coxeter group of type B.
It is generated by $\pi_0:=(1,-1)$
and $\pi_i:=(i,i+1), \ 1\leq i\leq n-1$, 
which 
satisfy the generalized braid relations
\begin{equation}\label{eq:coxeterB}
\begin{cases}
	\pi_i^2 =e, & 0\leq i\leq n-1,\\
	(\pi_0\pi_1)^4
	 =(\pi_i\pi_{i+1})^3=e, & 1\leq i\leq n-1,\\
	(\pi_i\pi_j)^2
	 =e, & |i-j|\geq 2, \ 0\leq i,j\leq n-1.
\end{cases}
\end{equation}
An element $\sigma\in \Sigma_n$ expresses an irreducible 
form,
\begin{equation*}
	\sigma=\pi_{i_1}\cdots\pi_{i_k},
	\quad 0\leq i_1,\dots,i_k\leq n-1,
\end{equation*}
and in this case
\begin{align*}
	\ell_1(\sigma)
	& := \text{the number of $\pi_0$ in $\sigma$},\\
	\ell_2(\sigma)
	& :=\text{the number of $\pi_i, \ 1\leq i\leq n-1$,
	in $\sigma$}
\end{align*}
are well defined. Let $H$ be a separable Hilbert space. For  a given self-adjoint involution $f \mapsto \overline{f}$ 
for $f\in H$, 
an action of $\Sigma_n$ on $H^{\otimes n}$ is defined by
\begin{equation}\label{eq:pi-action}
\begin{cases}
	\pi_0(f_1\otimes \cdots \otimes f_n)
	= \overline{f_1}\otimes f_2\otimes\cdots \otimes f_n, & n\geq 1,\\
	\pi_i (f_1\otimes \cdots \otimes f_n)
	= f_1\otimes \cdots  \otimes f_{i-1} 
	\otimes f_{i+1} \otimes f_i  \otimes f_{i+2}\otimes\cdots\otimes f_n,
	& n\geq 2, \ 1\leq i\leq n-1.
\end{cases}
\end{equation}
The $(\alpha,q)$-inner product on the full Fock space $\mathcal{F}(H)$
is defined by 
\begin{equation}\label{eq:aq-inner product}
	\langle f_1\otimes \cdots \otimes f_m, 
	g_1\otimes\cdots \otimes g_n\rangle_{\alpha,q}
	:=\delta_{m,n}\sum_{\sigma\in \Sigma_n}
	\alpha^{\ell_1(\sigma)}q^{\ell_2(\sigma)}
	\prod_{j=1}^n
	\langle 
	f_j, g_{\sigma(j)}
	\rangle_H ,
	\ \alpha, q\in (-1,1)
\end{equation}
with conventions $0^0=1$ 
and $g_{-k} = \overline{g_k}, \ k=1,2,\dots,n$.
For example,  
if one may define the involution as $\overline{f}:=-f$,
then  $g_{-k}=-g_k$. Equipped with this inner product the full Fock space $\mathcal{F}(H)$ is denoted by $\mathcal{F}_{\alpha,q}(H)$ to emphasize on the dependence of the inner product on $\alpha,q$. 

The $(\alpha,q)$-creation operator $B^+_{\alpha,q}(f)$ is the usual left creation operator on the full Fock space, and the $(\alpha,q)$-annihilation operator $B^-_{\alpha,q}(f)$ is its adjoint with respect to the inner product $\langle\cdot , \cdot \rangle_{\alpha,q}$. They satisfy the commutation relation
\begin{equation}\label{eq:aq-commutation}
	B^-_{\alpha,q}(f)B^+_{\alpha,q}(g)-qB^
	+_{\alpha,q}(g)B^-_{\alpha,q} (f)
	=\langle f,g\rangle_H I
	+\alpha\langle \overline{f},g\rangle_H q^{2N}, 
	\quad f,g\in H.
\end{equation}
The readers can consult \cite{BEH15} for details.

%

\end{document}